\documentclass{mcom-l}

\usepackage{mathtools}
\usepackage{mathrsfs}
\usepackage{todonotes}
\usepackage{hyperref}
\usepackage{color,graphicx}

\usepackage[normalem]{ulem}

\newtheorem{theorem}{Theorem}[section]
\newtheorem{lemma}[theorem]{Lemma}

\newtheorem{remark}[theorem]{Remark}
\newtheorem{proposition}[theorem]{Proposition}
\newtheorem{definition}[theorem]{Definition}
\newtheorem{corollary}[theorem]{Corollary}

  \numberwithin{equation}{section}

\newcommand{\ri}{\mathrm{i}}
\newcommand{\re}{\mathrm{e}}
\newcommand{\rd}{\, \mathrm{d}}
\newcommand{\C}{\mathbb{C}}

\newcommand{\N}{\mathbb{N}}

\newcommand*{\bsc}{\boldsymbol{c}}

\newcommand*{\bsx}{\boldsymbol{x}}

\newcommand*{\bstheta}{\boldsymbol{\theta}}
\newcommand*{\bstau}{\boldsymbol{\tau}}

\newcommand{\T}{\mathbb{T}}
\newcommand{\R}{\mathbb{R}}

\newcommand{\Z}{\mathbb{Z}}

\newcommand{\calO}{\mathcal{O}}

\newcommand{\rmse}{\mathrm{rmse}}

\newcommand{\RMT}{\mathrm{RMT}}

\newcommand{\mix}{\mathrm{mix}}

\newcommand{\smon}{\mathcal{S}_+^{\rm mon}}

\usepackage{xcolor}
\definecolor{darkblue}{RGB}{0,60,180} 
\definecolor{darkgreen}{RGB}{0,130,70}
\definecolor{darkorange}{RGB}{180,60,0}
\mathtoolsset{showonlyrefs=true}

\allowdisplaybreaks

\title{M{\"o}bius-Transformed Trapezoidal Rule}

\author{Yuya Suzuki}
\address{Department of Mathematics and Systems Analysis, Aalto University School of Science, Espoo, FI-00076 Aalto, Finland}
\email{yuya.suzuki@aalto.fi}

\author{Nuutti Hyvönen}
\address{Department of Mathematics and Systems Analysis, Aalto University School of Science, Espoo, FI-00076 Aalto, Finland}
\email{nuutti.hyvonen@aalto.fi}

\author{Toni Karvonen}
\address{School of Engineering Sciences, Lappeenranta--Lahti University of Technology LUT, Yliopistonkatu 34, 53850 Lappeenranta, Finland \newline \indent Department of Mathematics and Statistics, University of Helsinki, P.O. 68 (Pietari Kalmin katu 5), 00014 University of Helsinki, Finland}
\email{toni.karvonen@helsinki.fi}

\thanks{This work was supported by the Research Council of Finland (decisions 338567, 348503, 353081, 359181 and 359183).}

\subjclass[2010]{Primary 
65D15, 65D30, 65D32, 68W20}

\keywords{Numerical integration, M{\"o}bius transformation, trapezoidal rule, weighted Sobolev space, Gaussian Sobolev space, Schwartz function}

\date{DATE}

\begin{document}

\maketitle
\begin{abstract}
We study numerical integration by combining the trapezoidal rule with a M{\"o}bius transformation that maps the unit circle onto the real line. We prove that the resulting transformed trapezoidal rule attains the optimal rate of convergence if the integrand function lives in a weighted Sobolev space with a weight that is only assumed to be a positive Schwartz function decaying monotonically to zero close to infinity. Our algorithm only requires the ability to evaluate the weight at the selected nodes, and it does not require sampling from a probability measure defined by the weight nor information on its derivatives. In particular, we show that the M{\"o}bius transformation, as a change of variables between the real line and the unit circle, sends a function in the weighted Sobolev space to a periodic Sobolev space with the same smoothness. Since there are various results available for integrating and approximating periodic functions, we also describe several extensions of the M{\"o}bius-transformed trapezoidal rule, including function approximation via trigonometric interpolation, integration with randomized algorithms, and multivariate integration. 
\end{abstract}


\section{Introduction}\label{sec:intro}
This paper considers numerical integration for weighted Sobolev spaces on the real line. The aim is to attain the optimal rate of convergence in terms of the smoothness of the integrand function for a wide class of weights, namely for the positive Schwartz functions that decay monotonically to zero close to infinity, dubbed simply as {\em monotonic Schwartz weights} in what follows. Here the ``optimality" of an algorithm is to be understood in the sense of the worst-case asymptotic error amongst all linear quadratures. We propose a simple algorithm that matches this optimality criterion by combining a M\"obius transformation that maps the unit circle onto the real line with the trapezoidal rule for periodic functions.  The introduced {\em M\"obius-transformed trapezoidal rule} can also be straightforwardly generalized into a randomized integration method and combined with trigonometric interpolation to introduce an algorithm for function approximation in weighted Sobolev spaces. These extensions also exhibit optimal convergence rates in terms of the smoothness of the target function. For completeness, it should be mentioned that building quadrature rules via a variable change and a subsequent application of the trapezoidal rule is definitely not a new idea~\cite{Sag1964, Schwartz1969, Stenger1973, TakahasiMori1973}, but it seems that there are no previous works proving optimal convergence of such an approach for integration in weighted Sobolev spaces on the real line.

Weighted Sobolev spaces have attracted a significant amount of attention within the numerical analysis community in the recent years. In particular, weighted integration of finitely smooth functions over unbounded domains is widely studied in the area of uncertainty quantification to tackle partial differential equations with random coefficients; see, e.g,~\cite{GKNSSS2015,KN2016,Schwab2011}.
Although such problems often require algorithms for high-dimensional integration, the present paper mainly focuses on one-dimensional numerical integration over the real line, which has the potential to lay foundations for high-dimensional counterparts, such as sparse grids \cite{BG2004} and quasi-Monte Carlo methods \cite{DKS2013}. 

Let us briefly review recent literature on numerical integration and function approximation for finitely smooth functions over unbounded domains. When the weight is a Gaussian probability density, the corresponding weighted Sobolev spaces have been considered by numerous authors; see, e.g., \cite{DILP2018,KSG2023,DK2023,GHRR2023,GKS2023} and the references therein. Freud weights, one possible generalization of Gaussian weights, have also been studied in our general context \cite{EG2024,D2024}, as have unweighted Sobolev spaces with a certain decay condition \cite{NS2023}. All aforementioned articles use the same weight for defining the Sobolev space for the target function and measuring the error in the considered approximation, which is an assumption that can be dropped:
\cite{NP2020} employs unweighted Sobolev spaces for the target function but presents weighted $L^2$ and $L^{\infty}$ error estimates, \cite{KWW2006} studies the relation between the two weights in a general framework, and univariate optimal algorithms and their convergence rates are considered in \cite{KPW2016,KPPW2020}.  
 Table~\ref{tab:summary} summarizes the settings of those aforementioned papers whose foci are close to ours.

\begin{table}
\caption{Summary of the settings in some papers mentioned in the literature review of Section~\ref{sec:intro}. 
For conciseness, we omit settings where the focus is different from ours; e.g., \cite{GHRR2023,EG2024} also contain results for infinitely smooth functions. 
Refer to Section~\ref{sec:sobolev} for the used notation. The subscript "$\mix$" here means Sobolev spaces of the dominating-mixed smoothness type; see the precise definition in each reference.}
\label{tab:summary}
\vspace{-0.7cm}
\begin{center}
\begin{tabular}{|c| c| c| c| c|} 
 \hline
  & App or Int & Source space & Target error& Note \\ [0.5ex] 
    \hline
    \hline
    \multicolumn{5}{|l|}{\kern5cm$\rho$: Gaussian}\\
    \hline 
 \cite{DILP2018} & Integration & $W_{\rho,\mix}^{\alpha,2}(\R^d)$  & $\rho$-weighted integral & \\ 
 \hline
 \cite{KSG2023}  & Integration & $W_{\rho}^{\alpha,2}(\R)$ & $\rho$-weighted integral & \\
 \hline
 \cite{DK2023}  & Both & $W_{\rho,\mix}^{\alpha,q}(\R^d)$ & $\rho$-weighted int., $L^p_{\rho}$ & $1\le p < q <\infty$\\
 & & & & and $p=q=2$ \\
 \hline
  \cite{GHRR2023}  & Both & $W_{\rho,\otimes}^{\alpha,2}(\R^d)$ & $\rho$-weighted int., $L^2_{\rho}$ &Infinite dimension \\
  \hline
  \cite{GKS2023}  & Integration & $W_{\rho}^{\alpha,2}(\R)$ & $\rho$-weighted integral &Randomized setting \\
 \hline
     \hline
    \multicolumn{5}{|l|}{\kern5cm$\rho$: Freud weight}\\
     \hline
  \cite{EG2024}  & Both & $W_{\rho}^{\alpha,2}(\R)$ & $\rho$-weighted int., $L^2_{\rho}$ & \\
    \hline
      \cite{D2024}  & Approximation & $W_{\rho,\mix}^{\alpha,q}(\R^d)$ & $L^p_{\rho}$ & $1\le p < q \le\infty$  \\
      & & & & and more
      \\
    \hline
         \hline
    \multicolumn{5}{|l|}{\kern5cm Other settings}\\
     \hline
 \cite{NS2023} & Integration &$W^{\alpha,2}_{\mix}(\R^d)$ & Unweighted integral &\\
 \hline
 \cite{NP2020} & Approximation &$W^{\alpha,2}_{\mix}(\R^d)$ & $L_{\rho}^{2}$ and $L_{\tilde{\rho}}^\infty$&\\
 \hline
 \cite{KWW2006} & Integration &$W^{1,2}_{\psi,\otimes}(\R^d)$ & $\rho$-weighted integral & $\psi\ne\rho$\\
 \hline
 Ours & Both &$W^{\alpha,q}_{\rho}(\R)$ & $\rho$-weighted int., $L^p_{\rho}$ & $1\le p <q < \infty$ \\
[1ex] 
 \hline
\end{tabular}
\end{center}
\end{table}

Concerning results in related settings, there have been studies on \emph{periodization} strategies which transform a non-periodic target function defined on a finite closed interval into a periodic one. Combined with a subsequent use of the trapezoidal rule, one can attain a faster convergence rate. We refer to \cite[Section~1]{S2006} for an overview.
However, we emphasize that for such studies on finitely smooth functions, the smoothness of the target function is often required as an input for the algorithm to attain the desired rate of convergence, whereas our method does not require this information and still achieves the optimal rate of convergence automatically.
The multivariate counterpart of periodization strategy has been studied in the context of quasi-Monte Carlo methods, more precisely, lattice rules (see, e.g., \cite{S2006,KSW2007}). For instance, \cite{GSY2019} shows that tent-transformed lattice rules can achieve second-order convergence in an appropriate function space setting, without any dimension dependence.

In addition to the optimal convergence rates (without any extra logarithmic factors) for integration and function approximation, the M\"obius-transformed trapezoidal rule also exhibits other desirable characteristics. First of all, its implementation does not require information on the smoothness of the target integrand function or the ability to sample from the probability distribution defined by the employed weight. Moreover, its capability to handle monotonic Schwartz weights, i.e., weights whose all derivatives converge to zero at infinity (only) faster than the reciprocal of any polynomial, allows to consider weights that converge to zero slower than a Gaussian density, say, only at the rate $\re^{-|x|}$ or even slower. In particular, the choice of the monotonic Schwartz weight does not affect the rate of convergence for any of the introduced algorithms. It is also worth noting that the M\"obius-transformed trapezoidal rule enables nested implementations, where function evaluations are reused when the number of quadrature points is increased, and combining it with trigonometric interpolation in function approximation allows the use of Fast Fourier Transform (FFT) to ease the computational burden.

The rest of this paper is organized as follows. In Section~\ref{sec:prelim}, we introduce and prove necessary concepts and useful lemmas related to monotonic Schwartz weights, M\"obius transformations, and Sobolev spaces. Section~\ref{sec:traped} presents our main result for numerical integration, i.e., that the M\"obius-transformed trapezoidal rule achieves the optimal rate of convergence. Section~\ref{sec:rand} extends this result to a randomized setting. In Section~\ref{sec:app}, we consider a problem of function approximation and prove the optimality of an algorithm that is based on combining the M\"obius-transformed trapezoidal rule with trigonometric interpolation. Section~\ref{sec:multidim} briefly considers a multidimensional extension of our method. Finally, Section~\ref{sec:conc} presents the concluding remarks. Induction proofs for a few technical results that are utilized in our analysis are collected in Appendix~\ref{ap:induction}.

\section{Preliminaries}\label{sec:prelim}

\subsection{Monotonic Schwartz weights}
Although the main motivation for our considerations are integrals weighted by the standard Gaussian measure, our arguments work without major modifications for a wider class of rapidly decreasing weights. To introduce our setting, consider the one-dimensional Schwartz space
\[
\mathcal{S} = \big\{ \omega \in C^{\infty}(\R) \ \big| \   \| \omega \|_{\alpha, \beta} < \infty \ \text{for all} \ \alpha, \beta \in \N_0 \big\}, 
\]
where
\[
\| \omega \|_{\alpha, \beta} = \sup_{x \in \R} | x^\alpha \omega^{(\beta)}(x) |,
\]
and $\omega^{(\beta)}$ denotes the (weak) derivative of order $\beta$. If $\| \omega \|_{\alpha, 0}$ is finite for all $\alpha \in \N_0$, we say that $\omega$ is {\em rapidly decreasing}. 

Consider the set of positive Schwartz weights on $\R$, 
\begin{equation}
\label{eq:weight_space}
\mathcal{S}_+ = \big\{ \omega \in \mathcal{S} \ | \ \omega: \R \to \R, \ \omega(x) > 0  \ \text{for all} \ x \in \R \big\},
\end{equation}
and, in particular, its subset
\begin{equation}
\label{eq:weight_space_monot}
\mathcal{S}_{+}^{\rm mon} = \big\{ \omega \in \mathcal{S}_+ \ | \  \exists K \in \R_+ \ \text{such that} \ x \, \omega'(x) \leq 0 \ \text{for all} \ x \in \R \setminus [-K, K]  \big\}
\end{equation}
consisting of functions that are monotonic on $(-\infty, -K)$ and $(K, \infty)$ for some $K > 0$. The following lemma and corollary form the basis for our treatment of these monotonic Schwartz weights.
\begin{lemma}
\label{lemma:decay}
Let $\omega \in \mathcal{S}_+^{\rm mon}$ and $\alpha, \beta \in \N_0$. For any $r > 1$,
\begin{equation}
\label{eq:log_deriv}
\left\| \frac{|\omega^{(\beta)}|^r}{\omega} \right \|_{\alpha,0} < \infty,
\end{equation}
i.e., $\frac{|\omega^{(\beta)}|^r}{\omega}$ is rapidly decreasing.
\end{lemma}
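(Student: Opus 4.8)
The plan is to reduce the claim to the asymptotic behaviour of $|\omega^{(\beta)}(x)|^r/\omega(x)$ as $|x|\to\infty$. On any compact interval $\omega$ is continuous and strictly positive, hence bounded away from zero, while $\omega^{(\beta)}$ and $x^\alpha$ are bounded there, so $x^\alpha|\omega^{(\beta)}(x)|^r/\omega(x)$ is automatically bounded on compact sets. The case $\beta=0$ is immediate, since $|\omega|^r/\omega=\omega^{r-1}$ and a positive power of a rapidly decreasing function is again rapidly decreasing: $x^\alpha\omega(x)^{r-1}=\big(x^{\alpha/(r-1)}\omega(x)\big)^{r-1}$ is bounded. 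So I would assume $\beta\ge 1$ from now on, and let $K$ be the constant from the definition of $\smon$ in \eqref{eq:weight_space_monot}.

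The heart of the proof is the following estimate, which I would establish first: for every $\beta\ge1$, every $N\in\N$, and every $M\in\N_0$ there is a constant $C>0$ such that
\[
|\omega^{(\beta)}(x)|\;\le\;C\,\omega(x)^{N/(\beta+N)}\,|x|^{-M\beta/(\beta+N)}\qquad\text{for all }|x|\ge K+1.
\]
To prove it I would fix $x>K+1$, the case $x<-(K+1)$ being symmetric since $\omega$ is increasing on $(-\infty,-K)$. The main tool is the standard interval interpolation inequality: for $f\in C^{\beta+N}([a,a+L])$,
\[
\sup_{[a,a+L]}|f^{(\beta)}|\;\le\;c_{\beta,N}\Big(L^{-\beta}\sup_{[a,a+L]}|f|+L^{N}\sup_{[a,a+L]}|f^{(\beta+N)}|\Big),
\]
with $c_{\beta,N}$ independent of $L$ (it follows from Taylor's theorem after rescaling $[a,a+L]$ to $[0,1]$). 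Applying this to $f=\omega$ on $[x,x+L]\subset(K,\infty)$ and using monotonicity, $\sup_{[x,x+L]}\omega=\omega(x)$, together with the rapid decrease of $\omega^{(\beta+N)}$, which gives $\sup_{[x,x+L]}|\omega^{(\beta+N)}|\le\|\omega\|_{M,\beta+N}\,x^{-M}$ because $t\ge x\ge1$ on that interval, yields
\[
|\omega^{(\beta)}(x)|\;\le\;c_{\beta,N}\Big(L^{-\beta}\omega(x)+L^{N}\|\omega\|_{M,\beta+N}\,x^{-M}\Big)\qquad\text{for every }L>0.
\]
Choosing $L$ so that the two terms balance (which is legitimate, and in fact keeps $L$ bounded, since $\omega(x)x^{M}\le\|\omega\|_{M,0}$) produces exactly the displayed central estimate.

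Given the central estimate, the lemma follows by tuning the parameters. First pick $N\in\N$ large enough that $rN/(\beta+N)\ge1$; this is possible \emph{precisely because $r>1$}, as the condition reads $N(r-1)\ge\beta$. Then pick $M\in\N_0$ large enough that $rM\beta/(\beta+N)\ge\alpha$, which is possible since $\beta\ge1$. Raising the central estimate to the power $r$ and dividing by $\omega(x)$ gives, for $|x|\ge K+1$,
\[
\frac{|\omega^{(\beta)}(x)|^r}{\omega(x)}\;\le\;C^r\,\omega(x)^{rN/(\beta+N)-1}\,|x|^{-rM\beta/(\beta+N)}\;\le\;C^r\,\|\omega\|_{0,0}^{\,rN/(\beta+N)-1}\,|x|^{-\alpha},
\]
where I used $0<\omega\le\|\omega\|_{0,0}$ with the nonnegative exponent $rN/(\beta+N)-1\ge0$, and $rM\beta/(\beta+N)\ge\alpha$. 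Thus $|x|^\alpha|\omega^{(\beta)}(x)|^r/\omega(x)$ is bounded for large $|x|$, and together with the compact-interval observation this establishes \eqref{eq:log_deriv} for every $\alpha$, i.e., rapid decrease of $|\omega^{(\beta)}|^r/\omega$.

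The only genuinely non-elementary ingredient is the interval interpolation inequality, and the subtlety to get right is the bookkeeping of exponents: the power of $\omega(x)$ delivered by the central estimate is $N/(\beta+N)$, which can be pushed arbitrarily close to $1$ by enlarging $N$, while the decay in $|x|$ comes for free from the rapid decrease of $\omega^{(\beta+N)}$. It is exactly the hypothesis $r>1$ that makes $rN/(\beta+N)\ge1$ attainable, so that the negative power of $\omega$ created by dividing by $\omega$ is absorbed rather than blowing up; for $r=1$ the statement is false (take, e.g., a weight behaving like $\re^{-x^2}$, for which $|\omega'|/\omega$ grows like $|x|$), so no slicker argument avoiding this point is possible.
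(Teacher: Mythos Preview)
Your proof is correct and follows essentially the same route as the paper: bound $|\omega^{(\beta)}(x)|$ by an interpolation-type inequality on $[x,x+L]$ (the paper uses a forward finite-difference formula of order $m$, you use the Landau--Kolmogorov interval inequality), exploit monotonicity to replace $\sup_{[x,x+L]}\omega$ by $\omega(x)$, optimize over $L$, and then choose the auxiliary order large enough that $r$ times the resulting exponent of $\omega$ exceeds $1$. The only cosmetic difference is that you extract the polynomial decay in $x$ from the rapid decrease of $\omega^{(\beta+N)}$ via the extra parameter $M$, whereas the paper bounds the higher-derivative term by the constant $\|\omega\|_{0,\beta+m}$ and obtains the polynomial decay at the very end from the rapid decrease of $\omega$ itself.
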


\begin{proof}
    Let $\omega \in \mathcal{S}_+^{\rm mon}$ be arbitrary and $K$ such that $\omega$ is monotonic on $(-\infty, -K)$ and $(K, \infty)$. Due to symmetry as well as the smoothness and positivity of $\omega$, it is sufficient to prove that
    \[
    \lim_{x \to \infty} x^\alpha \frac{|\omega^{(\beta)}(x)|^r}{\omega(x)} = 0
    \]
for any $r>1$ and $\alpha, \beta \in \N_0$.
    
    As the case $\beta = 0$ is trivial, assume that $\beta \in \N$. Consider a forward finite difference formula of an arbitrary order $m \in \N$,
\begin{equation}
\label{eq:fdifference0}
\bigg| \omega^{(\beta)}(x) - \frac{1}{h^\beta}\sum_{j=0}^{\beta + m-1} a_{j,\beta,m} \, \omega(x + j h) \bigg| \leq C_{\beta,m} h^{m} \| \omega \|_{0,\beta + m} = C_{\beta,m,\omega}' h^m
\end{equation}
with $x \in (K, \infty)$ and $h>0$. The existence of such  $a_{0,\beta,m}, \dots, a_{\beta + m-1, \beta,m} \in \R$ follows,~e.g.,~by applying the construction in \cite[p.~161--162]{Collatz60} to an equidistant grid and just a term of order $\beta$ in the approximated differential expression, which yields the sought-for scaling by $h^{-\beta}$ in the difference scheme. By the monotonicity of $\omega$ and the inverse triangle inequality, the estimate \eqref{eq:fdifference0} leads to
\begin{equation}
\label{eq:fdifference}
\big| \omega^{(\beta)}(x) \big| \leq 
C''_{\beta,m} \frac{\omega(x)}{h^\beta} + C'_{\beta,m,\omega} h^m \eqqcolon r(h),
\end{equation}
where $C''_{\beta,m} = \sum_{j=0}^{\beta + m-1} |a_{j,\beta,m}|$. As \eqref{eq:fdifference} holds for any $h>0$ and $r(h)$ tends to infinity when $h \to 0^+$ or $h \to \infty$, the optimal version of \eqref{eq:fdifference} is obtained at the unique zero of the derivative of $r$, i.e., at
\[
h = \left(\frac{\beta \, C''_{\beta,m} \omega(x)}{m \, C'_{\beta,m,\omega}}\right)^{1/(m+\beta)}.
\]
This finally gives
\begin{equation}
\label{eq:fdifference2}
\big| \omega^{(\beta)}(x) \big|  \leq C'''_{\beta,m,\omega} \omega(x)^{m/(m+\beta)},
\end{equation}
where the constant $C'''_{\beta,m,\omega} > 0$ is independent of $x \in (K, \infty)$.

Fix $\N \ni m > \beta/(r-1)$, which guarantees that
\[
\frac{mr}{m+\beta} > 1.
\]
Thus, by virtue of \eqref{eq:fdifference2} and because $\omega \in \mathcal{S}$,
\[
     x^\alpha \frac{|\omega^{(\beta)}(x)|^r}{\omega(x)} \leq (C'''_{\beta,m,\omega})^r x^\alpha \omega(x)^{mr/(m+\beta) -1} \longrightarrow 0
    \]
as $x \to \infty$ for any $\alpha \in \N_0$.
\end{proof}

\begin{corollary}
\label{cor:decay}
If $\omega \in \mathcal{S}_+^{\rm mon}$, then also $\omega^{s} \in \mathcal{S}_+^{\rm mon}$ for any $s>0$.
\end{corollary}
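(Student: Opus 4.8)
The plan is to verify the three defining conditions of $\mathcal{S}_+^{\rm mon}$ for $\omega^s$ in turn, observing that all of them except one reduce to routine arguments. Smoothness and strict positivity of $\omega^s=\exp(s\log\omega)$ are immediate from $\omega\in C^\infty(\R)$ and $\omega>0$. For the monotonicity tail condition I would just differentiate: if $K$ is as in \eqref{eq:weight_space_monot} for $\omega$, then $x\,(\omega^s)'(x)=s\,\omega(x)^{s-1}\,\bigl(x\,\omega'(x)\bigr)$, and since $s\,\omega^{s-1}>0$ this is $\le 0$ on $\R\setminus[-K,K]$, so the very same $K$ works for $\omega^s$. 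Hence the whole content of the corollary is the assertion $\omega^s\in\mathcal{S}$, i.e., that $x^\alpha(\omega^s)^{(\beta)}$ is bounded on $\R$ for all $\alpha,\beta\in\N_0$.

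To prove this I would first record two elementary facts: (i) a nonnegative rapidly decreasing function raised to any positive power is again rapidly decreasing, and (ii) a finite product of rapidly decreasing functions is rapidly decreasing. Next, by the Fa\`a di Bruno formula (or an easy induction), $(\omega^s)^{(\beta)}$ is a finite linear combination of terms $\omega^{s-j}\,\omega^{(\gamma_1)}\cdots\omega^{(\gamma_j)}$ with $1\le j\le\beta$, each $\gamma_i\ge 1$, and $\gamma_1+\cdots+\gamma_j=\beta$ (the case $\beta=0$ being just $\omega^s$). Since $\omega^{s-j}>0$, it suffices to show that $\omega^{s-j}\,|\omega^{(\gamma_1)}|\cdots|\omega^{(\gamma_j)}|$ is rapidly decreasing for every such choice of $j,\gamma_1,\dots,\gamma_j$.

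If $j\le s$, this is immediate: $\omega^{s-j}$ is bounded because $\omega$ is, and $\omega^{(\gamma_1)}\cdots\omega^{(\gamma_j)}$ is a product of Schwartz functions, hence rapidly decreasing by (ii); the case $\beta=0$ is covered because $\omega^s$ is rapidly decreasing by (i). The remaining case $j>s$ is the crux, since there $\omega^{s-j}$ diverges as $|x|\to\infty$. Here I would control the growth against the decay supplied by Lemma~\ref{lemma:decay} via a H\"older-type splitting: choosing $r\in\bigl(1,\,j/(j-s)\bigr)$, which is possible because $j>s$, and writing $|\omega^{(\gamma_i)}|=\bigl(|\omega^{(\gamma_i)}|^{r}/\omega\bigr)^{1/r}\,\omega^{1/r}$ for each $i$, one obtains
\[
\omega^{s-j}\,|\omega^{(\gamma_1)}|\cdots|\omega^{(\gamma_j)}|
=\omega^{\,s-j+j/r}\prod_{i=1}^{j}\Bigl(\tfrac{|\omega^{(\gamma_i)}|^{r}}{\omega}\Bigr)^{1/r}.
\]
By the choice of $r$ the exponent $s-j+j/r$ is positive, so $\omega^{\,s-j+j/r}$ is bounded, while each factor $\bigl(|\omega^{(\gamma_i)}|^{r}/\omega\bigr)^{1/r}$ is rapidly decreasing by Lemma~\ref{lemma:decay} together with (i); fact (ii) then finishes the term, and with it the proof.

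The main obstacle I anticipate is exactly this case $j>s$: one must balance the divergence of the negative power of $\omega$ against the polynomial decay rate that Lemma~\ref{lemma:decay} provides, and the point that makes it go through is that the exponents $1/r$ in the splitting can be pushed close enough to $1$ so that the residual power $s-j+j/r$ of $\omega$ stays nonnegative. Everything else — the monotonicity tail, smoothness, positivity, facts (i)–(ii), and the bookkeeping in the Fa\`a di Bruno expansion — is routine.
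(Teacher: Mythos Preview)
Your proof is correct and follows essentially the same route as the paper: expand $(\omega^s)^{(\beta)}$ as a linear combination of terms $\omega^{s-j}\prod_i\omega^{(\gamma_i)}$ and absorb the possibly negative power of $\omega$ into the derivative factors using Lemma~\ref{lemma:decay}. The only cosmetic difference is that the paper distributes $\omega^s$ evenly as $\prod_i \omega^{(\gamma_i)}/\omega^{1-s/j}$ (corresponding to your limiting choice $r=j/(j-s)$ with zero residual power), thereby avoiding the case split $j\le s$ versus $j>s$.
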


\begin{proof}
Since the function $x \mapsto x^s$ maps the open positive real axis $\R_+$ smoothly onto itself for any $s>0$, the power $\omega^{s}$ belongs to $C^\infty(\R)$ for any $\omega \in \mathcal{S}_+^{\rm mon}$. Moreover, the monotonicity of the $s$th power guarantees that $\omega^{s}$ is decreasing/increasing precisely when $\omega$ is. Hence, the assertion follows if we show that
\begin{equation}
\label{eq:sth_power}
\| \omega^s \|_{\alpha, \beta} < \infty
\end{equation}
for any $\alpha, \beta \in \N_0$.

As \eqref{eq:sth_power} immediately follows from the definition of $\mathcal{S}$ for $\beta = 0$, we may assume that $\beta \in \N$. A straightforward induction argument, presented in Appendix~\ref{ap:induction}, shows that for any $s>0$, the derivative $(\omega^s)^{(\beta)}$ is a finite linear combination of terms of the form
\begin{equation}
\label{eq:comb_terms_sth}
\omega^s \prod_{j=1}^{\gamma} \frac{\omega^{(\tau_j)}}{\omega} = \prod_{j=1}^{\gamma} \frac{\omega^{(\tau_j)}}{\omega^{1-s/\gamma}},
\end{equation}
where the positive integers $\gamma$ and $\tau_{j}$ satisfy
\begin{equation}
\gamma \leq \beta \quad \text{and} \quad \sum_{j=1}^{\gamma} \tau_j = \beta.
\end{equation}
By Lemma~\ref{lemma:decay}, 
\begin{equation}
\frac{\omega^{(\tau_j)}}{\omega^{1-s/\gamma}}
\end{equation}
is rapidly decreasing, which means that the same also applies to any term of the product form \eqref{eq:comb_terms_sth}. By the triangle inequality, $(\omega^s)^{(\beta)}$ is thus rapidly decreasing, which completes the proof.
\end{proof}

\subsection{M\"obius transformations}
\label{sec:mobius}
Our main tool for reducing integrals over $\R$ into periodic ones over $\T = \T^1$ are M\"obius transformations that map the unit circle onto the real axis of the complex plane. Here and in what follows, $\T^d$ denotes the $d$-dimensional torus, that is, $[0,2 \pi]^d$ with opposite faces identified. All M\"obius transformations (see,~e.g.,~\cite{Hille59}) with the aforementioned property can be given in the form
\begin{equation}
    \Phi_{\zeta,\vartheta}(z) = \frac{\overline{\zeta} z - {\rm e}^{{\rm i} \vartheta} \zeta}{z - {\rm e}^{{\rm i} \vartheta}}, \qquad z \in \C.
\end{equation}
Here, $\vartheta \in \R$ corresponds to a rotation prior to mapping the unit circle onto the real axis and its interior and exterior, respectively, onto the upper and lower halves of the complex plane (or vice versa). As selecting $\vartheta$ essentially only corresponds to choosing the preimage of infinity on the unit circle, it plays no essential role in our analysis and can be set to $\vartheta = 0$, meaning that $\Phi(1) = \infty$. The other free parameter $\zeta \in \C$, with a nonvanishing imaginary part, determines the image of the origin under $\Phi$. Without loss of generality, we may assume that ${\rm Re}(\zeta) = 0$ since a nonzero real part of $\zeta$ only corresponds to a horizontal translation on the image side. Hence, we set $\zeta = {\rm i}\, c$ and note that the sign of $0\not= c \in \R$ defines the half of the complex plane to which $\Phi$ maps the unit disk; from the standpoint of the restriction of $\Phi$ onto the unit circle, the choice between $\pm c$ corresponds to the orientation of the parametrization, that is, choosing whether increase in the polar angle on the unit circle leads to movement to right or left on the real axis. 

With these choices, our transformation of the unit circle onto the real line as a function of the polar angle $\theta\in (0,2\pi)$ reads
\begin{align*}
\phi_c(\theta) &= \Phi_{{\rm i} c,0}(\re^{\ri \theta})
=
- \ri c \,  \frac{\re^{\ri \theta}+1}{\re^{\ri \theta}-1}
=
-  c \,  \frac{\frac{1}{2}(\re^{\ri \theta/2}+ \re^{-\ri \theta/2})}{\frac{1}{2 \ri} (\re^{\ri \theta/2} - \re^{-\ri \theta/2})} = - c \cot\!\Big(\frac{\theta}{2}\Big).
\end{align*}
Furthermore,
\begin{equation}
\label{eq:invphi}
    \phi_c'(\theta) = \frac{c}{2 \sin^2(\theta/2)}, \quad 
    \phi_c^{-1}(x) = 2 \, {\rm arccot}\!\left(-\dfrac{x}{c}\right), \quad (\phi_c^{-1})'(x) = \frac{2 \, c}{c^2 + x^2}.
\end{equation}
 For simplicity and without severe loss of generality, we assume that $c > 0$ so that the derivatives in \eqref{eq:invphi} are positive everywhere. Unless the scaling by $c$ plays an essential role, we write $\phi(\theta)$ instead of $\phi_c(\theta)$.

\subsection{Sobolev spaces}\label{sec:sobolev}
This section provides a brief overview of the Sobolev spaces used in this paper. We denote by $L_\rho ^q (\R)$ the space of Lebesgue measurable functions $f: \R \to \C$ with the norm
\[
\|f\|_{L_\rho ^q (\R)}
\coloneqq
\bigg(\int_\R |f(x)|^q \rho(x) \rd x\bigg)^{1/q}
<
\infty, \qquad 1 \leq q < \infty.
\]
When we do not include the subscript $\rho$, i.e., write $L ^q (\R)$, we mean the unweighted space with $\rho \equiv 1$. Our target functions live in the weighted Sobolev space
\begin{align} \label{eq:gaussian-sobolev-space}
W^{\alpha,q}_{\rho}(\R) \coloneqq \bigg\{f\in L_\rho ^q (\R) \ \Big| \ \|f\|_{W^{\alpha,q}_{\rho}(\R)} \coloneqq \bigg(\sum_{\tau=0}^\alpha \int_\R |f^{(\tau)}(x)|^q \rho(x) \rd x  \bigg)^{1/q} < \infty \bigg\} 
\end{align}
for some $1 < q<\infty$ and $\alpha\in\N$, with $f^{(\tau)}$ denoting the $\tau$th weak derivative of $f$. The fact that $W^{\alpha,q}_{\rho}(\R)$ is a Banach space (or a Hilbert space for $q = 2$) for $\rho \in \smon$ is a consequence of \cite[Theorem~1.1 \& Remark~4.10]{Kufner84}. In what follows, we always consider the continuous representative of any given $f\in W^{\alpha,q}_{\rho}(\R)$, which is possible for $\alpha \in \N$ due to the Sobolev embedding theorem and the inclusion $W^{\alpha,q}_{\rho}(\R) \subset W^{1,q}_{\rm loc}(\R)$. 
In Section~\ref{sec:traped}, we specifically consider the case $q=2$ for numerical integration, even though the results also hold for any $q\in(1,\infty)$.

When suitably composed with the M\"obius transformation introduced in Section~\ref{sec:mobius}, our target functions are transformed into the periodic Sobolev space 
\begin{align}\label{eq:Lq-per-sob} 
    W^{\alpha,q}(\T) &\coloneqq \bigg\{ f\in L^q(\T)  \ \Big| \  \|f\|_{W^{\alpha,q}(\T)}^q \coloneqq \sum_{\tau=0}^{\alpha} \int_{\T} |f^{(\tau)}(x) |^q \rd x   < \infty \bigg\} 
    \end{align}
    for $1<q<\infty$ and $\alpha\in\N$. In our analysis, it is useful to employ an equivalent definition for $W^{\alpha,q}(\T)$ as a subspace of the standard Sobolev space $W^{\alpha,q}(0,2\pi)$ on $(0,2\pi)$ with compatibility conditions between the values of weak derivatives at $0$ and $2\pi$:
    \begin{align}
    \label{eq:Lq-per-sob2} 
    \nonumber
    W^{\alpha,q}_{\rm per}(0,2\pi)&=
    \bigg\{ f\in L^q(0,2\pi)  \ \Big| \  \|f\|_{W^{\alpha,q}(0,2\pi)}^q\coloneqq \sum_{\tau=0}^{\alpha} \int_0^{2\pi} |f^{(\tau)}(\theta) |^q \rd \theta  < \infty, \\[-1mm]
    & \kern4.2cm f^{(\tau)}(0)=f^{(\tau)}(2\pi) \text{ for } \tau=0,\ldots,\alpha-1  \bigg\} .
\end{align}
  The point evaluations at $0$ and $2\pi$ in \eqref{eq:Lq-per-sob2} are well-defined due to the trace theorem \cite[Theorem~7.53]{Adams1975} or the continuous embedding $W^{\alpha,q}(0,2\pi) \hookrightarrow C^{\alpha - 1, \lambda}([0,2\pi])$ for Hölder indices $0 < \lambda \leq 1 - 1/q$ \cite[Part~II in Theorem~5.4]{Adams1975}, demonstrating also that $W^{\alpha,q}_{\rm per}(0,2\pi)$ is a Banach space (or a Hilbert space for a $q = 2$) as a closed subspace of $W^{\alpha,q}(0,2\pi)$. The closure of the smooth compactly supported test functions $C^\infty_c(0, 2 \pi)$ in the topology of $W^{\alpha,q}(0,2\pi)$ is denoted by $\mathring{W}^{\alpha,q}(0,2\pi) \subset W^{\alpha,q}_{\rm per}(0,2\pi)$. Note that in \eqref{eq:Lq-per-sob} the weak derivative $f^{(\tau)}$ is defined via partial integration of periodic functions on $\T$ (see,~e.g.,~\cite[Section~5.2]{Saranen2002}), whereas in \eqref{eq:Lq-per-sob2}, $f^{(\tau)}$ denotes the standard weak derivative on $(0,2 \pi) \subset \R$, with $C^\infty_c(0, 2 \pi)$ serving as the space of test functions.

The following proposition shows that $W^{\alpha,q}(\T)$ and $W^{\alpha,q}_{\rm per}(0,2\pi)$ are, indeed, essentially the same space.

\begin{proposition}
\label{prop:periodicity}
    For $1<q<\infty$ and $\alpha\in\N$, the spaces $W^{\alpha,q}(\T)$ and $W^{\alpha,q}_{\rm per}(0,2\pi)$  can be identified via a bijective linear isometry $T: W^{\alpha,q}(\T) \to W^{\alpha,q}_{\rm per}(0,2\pi)$.
\end{proposition}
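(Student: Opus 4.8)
The plan is to let $T$ be the restriction map: identifying $\T$ with $[0,2\pi]$ (opposite endpoints glued), a function $f$ on $\T$ is regarded as a function on $[0,2\pi]$, and we set $Tf \coloneqq f|_{(0,2\pi)}$. Linearity is immediate. The isometry property will fall out of the key step, which is to reconcile the two notions of weak differentiation involved: the periodic one on $\T$, with test functions ranging over $C^\infty(\T)$ and no boundary contributions, versus the ordinary one on $(0,2\pi)$, with test functions in $C^\infty_c(0,2\pi)$. Once it is shown that $(Tf)^{(\tau)} = f^{(\tau)}|_{(0,2\pi)}$ almost everywhere for $\tau = 0,\dots,\alpha$, the equality $\|Tf\|_{W^{\alpha,q}(0,2\pi)} = \|f\|_{W^{\alpha,q}(\T)}$ follows at once, since the point $\{0\}\simeq\{2\pi\}$ has measure zero and therefore each term $\int_0^{2\pi}|(Tf)^{(\tau)}|^q$ agrees with $\int_\T |f^{(\tau)}|^q$.

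First I would check that $T$ maps $W^{\alpha,q}(\T)$ into $W^{\alpha,q}_{\rm per}(0,2\pi)$. Fix $f\in W^{\alpha,q}(\T)$ with periodic weak derivatives $f^{(\tau)}\in L^q(\T)$. Given any $\varphi\in C^\infty_c(0,2\pi)$, its $2\pi$-periodic extension $\tilde\varphi$ lies in $C^\infty(\T)$ because $\varphi$ vanishes near both endpoints; inserting $\tilde\varphi$ into the defining identity of the periodic weak derivative and discarding the null set $\{0\}\simeq\{2\pi\}$ yields $\int_0^{2\pi}(Tf)\,\varphi^{(\tau)} = (-1)^\tau\int_0^{2\pi}f^{(\tau)}|_{(0,2\pi)}\,\varphi$ for all such $\varphi$. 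Hence $(Tf)^{(\tau)} = f^{(\tau)}|_{(0,2\pi)}$ in the ordinary weak sense, so $Tf\in W^{\alpha,q}(0,2\pi)$ and the isometry is established as above. For the compatibility conditions, I would use that for $\tau\le\alpha-1$ the function $f^{(\tau)}$ has periodic weak derivative $f^{(\tau+1)}\in L^q(\T)$, hence coincides almost everywhere with a continuous $2\pi$-periodic function (its primitive $x\mapsto \mathrm{const}+\int_0^x f^{(\tau+1)}$, which closes up because $\int_\T f^{(\tau+1)}=0$, as seen by testing the periodic weak-derivative identity against the constant function). Restricting this representative to $(0,2\pi)$ must give the continuous representative of $(Tf)^{(\tau)}$ supplied by $W^{\alpha,q}(0,2\pi)\hookrightarrow C^{\alpha-1,\lambda}([0,2\pi])$, since two continuous functions equal almost everywhere coincide; therefore $(Tf)^{(\tau)}(0)=(Tf)^{(\tau)}(2\pi)$, i.e.\ $Tf\in W^{\alpha,q}_{\rm per}(0,2\pi)$.

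It remains to prove surjectivity; injectivity is automatic from the isometry. Given $g\in W^{\alpha,q}_{\rm per}(0,2\pi)$, the condition $g(0)=g(2\pi)$ for the continuous representative lets $g$ descend to a continuous function $f$ on $\T$ with $Tf=g$, and the claim is that $f\in W^{\alpha,q}(\T)$ with periodic weak derivatives equal, almost everywhere, to the ordinary weak derivatives $g^{(\tau)}$. To verify this I would fix $\psi\in C^\infty(\T)$ and integrate $\int_0^{2\pi}g\,\psi^{(\tau)}$ by parts $\tau$ times; each step is legitimate because $g^{(j)}\in W^{1,q}(0,2\pi)$ for $0\le j\le\alpha-1$, and every boundary term that appears has the form $\bigl[g^{(j)}\psi^{(\tau-1-j)}\bigr]_0^{2\pi}$ with $0\le j\le\tau-1\le\alpha-1$, hence vanishes: $g^{(j)}(0)=g^{(j)}(2\pi)$ by the compatibility conditions defining $W^{\alpha,q}_{\rm per}(0,2\pi)$, and $\psi^{(\tau-1-j)}(0)=\psi^{(\tau-1-j)}(2\pi)$ by periodicity of $\psi$. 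What survives is $\int_\T f\,\psi^{(\tau)} = (-1)^\tau\int_\T g^{(\tau)}\psi$ for every $\tau=0,\dots,\alpha$ and every $\psi\in C^\infty(\T)$, that is, $f^{(\tau)}=g^{(\tau)}\in L^q(\T)$ in the periodic sense; hence $f\in W^{\alpha,q}(\T)$ and $Tf=g$.

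I expect the only real obstacle to be bookkeeping rather than anything conceptual: one must track the boundary terms carefully through the iterated integration by parts and make sure that in each half of the argument the correct class of test functions is used (periodically extended bump functions when passing from $\T$ to $(0,2\pi)$, all of $C^\infty(\T)$ when going back), and that restriction commutes with passing to continuous representatives. None of these points is deep, and the compatibility conditions in \eqref{eq:Lq-per-sob2} are exactly what makes the boundary terms drop out.
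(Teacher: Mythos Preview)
Your proof is correct and follows essentially the same approach as the paper: both arguments reduce to the observation that the integration-by-parts identity relating periodic and ordinary weak derivatives produces boundary terms that vanish precisely when the compatibility conditions in \eqref{eq:Lq-per-sob2} hold. The paper packages this via an auxiliary space $\widetilde{W}^{\alpha,q}_{\rm per}(0,2\pi)$ (defined by testing against all $2\pi$-periodic $\psi\in C^\infty(\R)$) and a single formula \eqref{eq:partial_integration} handling both inclusions at once, whereas you argue the two directions separately; the substance is the same.
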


\begin{proof}
Let $\varphi: \R \ni \theta \mapsto (\cos \theta, \sin \theta)$ be the standard $2 \pi$-periodic parametrization  with respect to a polar angle $\theta$ for the embedding of $\T$ into $\R^2$. As $|\varphi'| \equiv 1$, it is straightforward to conclude that the linear mapping $T: g \mapsto g \circ \varphi$ defines a bijective isometry from $W^{\alpha,q}(\T)$ to 
\begin{align}
    \label{eq:Lq-per-sob3} 
    \nonumber
    \widetilde{W}^{\alpha,q}_{\rm per}(0,2\pi)&=
    \Big\{ f\in L^q(0,2 \pi)  \ \big| \  \|f\|_{\widetilde{W}^{\alpha,q}(0,2\pi)} < \infty \Big\},
\end{align}
where the algebraic definition of the norm $\|f\|_{\widetilde{W}^{\alpha,q}(0,2\pi)}$ is the same as that of $\|f\|_{W^{\alpha,q}(0,2\pi)}$, but the involved weak derivatives are required to satisfy the more restrictive condition 
\begin{equation}
\label{eq:weak_derivative}
\int_0^{2 \pi} f^{(\tau)}(\theta) \, \psi(\theta) \rd \theta 
= (-1)^{\tau} \int_0^{2\pi} f(\theta) \, \psi^{(\tau)}(\theta) \rd \theta, \qquad \tau = 1, \dots, \alpha, 
\end{equation}
for all $2 \pi$-periodic $\psi \in C^{\infty}(\R)$ (cf.~\cite[(5.5), (5.13) \& Exercise~5.3.2]{Saranen2002}). Hence, the assertion follows if we prove that $\widetilde{W}^{\alpha,q}_{\rm per}(0,2\pi) = W^{\alpha,q}_{\rm per}(0,2\pi)$. This boils down to showing that the weak derivatives of $f \in W^{\alpha,q}_{\rm per}(0,2\pi)$ satisfy \eqref{eq:weak_derivative} and that those of $f \in \widetilde{W}^{\alpha,q}_{\rm per}(0,2\pi)$ are compatible with the conditions on point evaluations at $0$ and $2 \pi$ in \eqref{eq:Lq-per-sob2}.

Any $f \in W^{\alpha,q}(0,2\pi)$ satisfies
\begin{align}
\label{eq:partial_integration}
\int_0^{2 \pi} f^{(\tau)}(\theta) \, \psi(\theta) \rd \theta = 
\sum_{k=1}^\tau & (-1)^{k+1} \big( f^{(\tau-k)}(2 \pi)  - f^{(\tau-k)}(0) \big) \psi^{(k-1)}(0) \nonumber \\[-1mm] & + (-1)^{\tau} \int_0^{2\pi} f(\theta) \, \psi^{(\tau)}(\theta) \rd \theta, \qquad \tau = 1, \dots, \alpha,
\end{align}
for all $2 \pi$-periodic $\psi \in C^{\infty}(\R)$, which could be proved by, e.g., approximating $f$ with smooth functions, integrating by parts and employing the continuous embedding $W^{\alpha,q}(0,2\pi) \hookrightarrow C^{\alpha - 1, \lambda}([0,2\pi])$ for $0 < \lambda \leq 1 - 1/q$  (cf.~\cite[Theorem~3.18 \& Part~II of Theorem~5.4]{Adams1975}). The claim now follows by choosing in turns $f \in W^{\alpha,q}_{\rm per}(0,2\pi)$ and $f \in \widetilde{W}^{\alpha,q}_{\rm per}(0,2\pi)$ in \eqref{eq:partial_integration} and comparing with \eqref{eq:Lq-per-sob2} and \eqref{eq:weak_derivative}.
\end{proof}

In Section~\ref{sec:multidim}, we consider a multivariate extension of our results via a componentwise M\"obius transformation. For this setting, we assume that the target function lives in a tensor product of one-dimensional weighted Sobolev spaces
\begin{equation}
\label{eq:tensor_product}
W^{\alpha,q}_{\rho,\otimes}(\R^d) \coloneqq \bigotimes_{j=1}^d W^{\alpha,q}_{\rho_j}(\R).
\end{equation}
After the componentwise M\"obius transformation, the target function is shown to be in a tensor product of periodic Sobolev spaces
\begin{equation}
\label{eq:tensor_product_torus}
W^{\alpha,q}_{\otimes}(\T^d) \coloneqq \bigotimes_{j=1}^d W^{\alpha,q}(\T).
\end{equation}
This space is known to be norm-equivalent to the Sobolev space of \emph{dominating mixed smoothness} (see, e.g., \cite{S1998})
   \begin{align}
    \label{eq:periodic-sobolev-space-d}
W^{\alpha,q}_{\rm mix}(\T^d) \coloneqq \bigg\{f\in L ^q(\T^d) \  \Big| \ \|f\|_{W^{\alpha,q} (\T^d)} \coloneqq \bigg(\sum_{|\bstau|_{\infty}\le \alpha} \int_{\T^d} |f^{(\bstau)}(\bsx)|^q  \rd \bsx  \bigg)^{1/q} \! \!< \infty \bigg\},
\end{align}
where $\bstau = (\tau_1, \dots, \tau_d) \in \N_0^d$ is a multi-index and $|\bstau|_{\infty} = \max_{j=1, \dots, d} \tau_j$.

\section{M{\"o}bius-Transformed Trapezoidal Rule} 
\label{sec:traped}
For $\rho \in \mathcal{S}_+^{\rm mon}$, denote a weighted integral of a continuous function $f: \R \to \C$ over the real line as
\begin{align*}
    I_{\rho}(f) &\coloneqq \int_\R f(x)\rho(x)\rd x
    =
    \int_0^{2\pi}f(\phi(\theta))\rho(\phi(\theta)) \phi'(\theta) \rd \theta
\end{align*}
and consider the approximation
\begin{align}\label{eq:quadrature}
    Q_{\rho,n}(f) \coloneqq \frac{2\pi}{n}\sum_{j=1}^n f(\phi(\theta_j))\rho(\phi(\theta_j)) \phi'(\theta_j) \approx I_{\rho}(f) ,
\end{align}
where $\theta_j\coloneqq 2\pi j/n$ for $j=1,\ldots,n$. 
We interpret $I_{\rho}$ and $Q_{\rho,n}$ as 
linear functionals on the $\rho$-weighted $L^2$-based Sobolev space $W^{\alpha,2}_{\rho}(\R)$.

Our main theorem is as follows:
\begin{theorem}[Upper bound on integration error]
\label{thm:main_result}
    Let $\alpha\in\N$, $\rho \in \mathcal{S}_+^{\rm mon}$ and $f\in W^{\alpha,2}_{\rho}(\R)$. Then it holds that
    \begin{equation}
    \label{eq:main_result}
        \big| I_{\rho}(f) - Q_{\rho,n}(f) \big| \leq C_{\rho, \alpha} n^{-\alpha} \| f \|_{W^{\alpha,2}_{\rho}(\R)}, 
    \end{equation}
    where $C_{\rho, \alpha} > 0$ is independent of $f$ and $n \in \N$.
\end{theorem}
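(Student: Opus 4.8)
The plan is to view $Q_{\rho,n}(f)$ as the $n$-point trapezoidal rule applied, on $[0,2\pi]$, to the transformed integrand
\[
g(\theta) \coloneqq f(\phi(\theta))\,\rho(\phi(\theta))\,\phi'(\theta),
\]
so that the quantity to be estimated is exactly $\bigl|\int_0^{2\pi} g(\theta)\rd\theta - \tfrac{2\pi}{n}\sum_{j=1}^n g(\theta_j)\bigr|$. \textbf{Stage 1 (periodic quadrature error).} Granting that $g$ lies in the periodic Sobolev space $W^{\alpha,2}(\T)$, the classical aliasing identity for equispaced periodic quadrature --- the rule reproduces $\re^{\ri k\theta}$ exactly unless $n \mid k$ --- expresses the error as $-2\pi\sum_{m\neq 0}\hat g_{mn}$ in terms of the Fourier coefficients of $g$. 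Cauchy--Schwarz and Parseval then give
\[
\Bigl|\,2\pi\!\!\sum_{m\neq 0}\hat g_{mn}\Bigr| \le 2\pi\Bigl(\sum_{m\neq 0}\frac{1}{|mn|^{2\alpha}}\Bigr)^{\!1/2}\Bigl(\sum_{m\neq 0}|mn|^{2\alpha}|\hat g_{mn}|^2\Bigr)^{\!1/2} \le C_\alpha\, n^{-\alpha}\,\|g\|_{W^{\alpha,2}(\T)},
\]
where the first sum converges because $\alpha \ge 1$. Hence the theorem reduces to the norm-transfer estimate $\|g\|_{W^{\alpha,2}(\T)} \le C_{\rho,\alpha}\,\|f\|_{W^{\alpha,2}_\rho(\R)}$.

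\textbf{Stage 2 (the regularity transfer)} is where I expect the real work. I would first prove this estimate for $f$ in the dense subspace $C_c^\infty(\R) \subset W^{\alpha,2}_\rho(\R)$, where $g$ is smooth and supported away from $0$ and $2\pi$, so that there is no subtlety with periodicity or point evaluations, and then extend to all of $W^{\alpha,2}_\rho(\R)$ by density and continuity, identifying $W^{\alpha,2}(\T)$ with $W^{\alpha,2}_{\rm per}(0,2\pi)$ via Proposition~\ref{prop:periodicity}. To prove the estimate, differentiate $g$ up to order $\alpha$ with the Leibniz rule and Fa\`a di Bruno's formula, writing $g^{(\tau)}$, $\tau \le \alpha$, as a finite sum of terms of the shape
\[
f^{(j)}(\phi(\theta))\,\rho^{(\ell)}(\phi(\theta))\,R(\theta), \qquad j \le \tau, \ \ \ell \le \tau,
\]
where $R$ is a product of derivatives of $\phi$. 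Since $\phi(\theta) = -c\cot(\theta/2)$ and each $\phi^{(m)}$ equals $c$ times a polynomial in $\cot(\theta/2)$, the change of variables $x = \phi(\theta)$ --- under which $\cot(\theta/2) = -x/c$ and $\rd\theta = (\phi^{-1})'(x)\rd x = \tfrac{2c}{c^2+x^2}\rd x$ by \eqref{eq:invphi} --- turns $R$ into a polynomial $P$ in $x$, so that
\[
\int_0^{2\pi}\bigl|f^{(j)}(\phi(\theta))\,\rho^{(\ell)}(\phi(\theta))\,R(\theta)\bigr|^2 \rd\theta = \int_\R |f^{(j)}(x)|^2\,|\rho^{(\ell)}(x)|^2\,|P(x)|^2\,\frac{2c}{c^2+x^2}\rd x .
\]
It then suffices to bound $|\rho^{(\ell)}(x)|^2|P(x)|^2\,\tfrac{2c}{c^2+x^2}$ by a constant multiple of $\rho(x)$, uniformly in $x \in \R$. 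For $\ell \ge 1$ this is exactly Lemma~\ref{lemma:decay} with $r = 2$: the function $|\rho^{(\ell)}|^2/\rho$ is rapidly decreasing and hence dominates the polynomial factor $|P|^2$. For $\ell = 0$ --- the case in which no derivative has fallen on $\rho\circ\phi$, so $\rho\circ\phi$ survives only to the first power --- the bound reduces to the boundedness of $x^2\rho(x)$, which holds because $\rho \in \mathcal{S}$. Summing the finitely many terms and over $\tau = 0,\dots,\alpha$ gives the norm-transfer estimate, and combining the two stages yields \eqref{eq:main_result}.

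The main obstacle is Stage 2: organizing the Leibniz--Fa\`a di Bruno expansion of $g^{(\tau)}$ and, above all, recognizing that the polynomial blow-up coming from the singularities of the M\"obius derivatives at $\theta \in 2\pi\Z$ is always absorbed either by the rapid decay of a genuine derivative of $\rho$ (through Lemma~\ref{lemma:decay}) or by the single surviving power of $\rho$. A secondary technical point is the density/continuity extension to arbitrary $f \in W^{\alpha,2}_\rho(\R)$: one checks that $C_c^\infty(\R)$ is dense (smooth truncation followed by mollification, using that $\rho$ is bounded and $\int_{|x|>R}|f^{(\tau)}|^2\rho \to 0$) and that $f \mapsto g$ is continuous from $W^{\alpha,2}_\rho(\R)$ into $W^{\alpha,2}(\T)$ with the stated bound.
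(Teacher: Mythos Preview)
Your proposal is correct and follows the paper's two-stage scheme exactly: reduce to the classical trapezoidal estimate for periodic Sobolev functions, then show that $g = ((f\rho)\circ\phi)\,\phi'$ lands in $W^{\alpha,2}(\T)$ with norm controlled by $\|f\|_{W^{\alpha,2}_\rho(\R)}$. Your Stage~2 norm bound is also argued just as in Lemma~\ref{thm:f-rho-norm}: Leibniz/chain-rule expansion of $g^{(\tau)}$, change of variable back to $\R$, and Lemma~\ref{lemma:decay} to absorb the polynomial blow-up. (One minor slip: in the $\ell=0$ case the polynomial factor can have degree up to $4\tau+2$, not just~$2$, but this is still harmless because $\rho$ is Schwartz.)

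The one substantive difference is how you obtain \emph{periodicity} of $g$. You work first with $f\in C_c^\infty(\R)$, so that $g\in C_c^\infty(0,2\pi)\subset \mathring{W}^{\alpha,2}(0,2\pi)\subset W^{\alpha,2}_{\rm per}(0,2\pi)$ automatically, and then extend by density; the paper instead verifies the boundary conditions $g^{(\tau)}(0^+)=g^{(\tau)}(2\pi^-)=0$ directly for arbitrary $f$, by showing that $f^{(\tau_1)}\rho^{1/2+\varepsilon}\in W^{1,2}(\R)\hookrightarrow L^\infty(\R)$ and letting the remaining Schwartz factor kill the $\phi$-derivatives. Your density route is shorter and sidesteps that embedding argument (and any chain-rule-for-weak-derivatives subtleties), at the price of checking that $C_c^\infty(\R)$ is dense in $W^{\alpha,2}_\rho(\R)$ and that the abstract limit coincides with the concrete $g$; the paper's direct route has the side benefit of establishing $g(0)=g(2\pi)=0$ explicitly, which it later exploits in its implementation remark.
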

The proof relies on showing that $g = ((f\rho)\circ \phi) \phi'$ belongs to the $2\pi$-periodic $L^2$-based Sobolev space $W^{\alpha,2}_{\rm per}(0,2\pi) \cong W^{\alpha,2}(\T)$
if $f\in W^{\alpha,2}_{\rho}(\R)$ for $\alpha \in \N$.
Indeed, after proving this, a classical result for the trapezoidal rule on periodic Sobolev spaces guarantees the convergence rate of order $n^{-\alpha}$; see,~e.g.,~ \cite[Proposition~7.5.6]{AH2009} and \cite[Theorem~2.4.1]{T2018_book}. 
\begin{lemma}
    \label{thm:f-rho-norm}
    Let $\alpha\in\N$, $f\in W^{\alpha,2}_{\rho}(\R)$, $\rho \in \mathcal{S}_+^{\rm mon}$ and $g = ((f\rho)\circ \phi) \phi'$. For $\tau=0,\ldots,\alpha$,
    \begin{equation}
    \label{eq:g-tau}
    \big\|g^{(\tau)} \big\|_{L^2(0,2\pi)} \leq C_{\rho, \tau} \|f\|_{W^{\alpha,2}_{\rho}(\R)},
    \end{equation}
    where the constant $C_{\rho, \tau} > 0$ is independent of $f$.
    Moreover, for $\tau=0,\ldots,\alpha-1$,
    \begin{equation}\label{eq:g-boundary}
    \lim_{\theta\to0^+} g^{(\tau)}(\theta) =
    \lim_{\theta\to 2\pi ^-} g^{(\tau)}(\theta) = 0.
    \end{equation}
    In consequence, $g\in W^{\alpha,2}_{\rm per}(0,2\pi)$.
\end{lemma}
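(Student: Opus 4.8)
The plan is to reduce everything to controlling, for each derivative order, the pointwise behavior and $L^2$-size of $g^{(\tau)}$ where $g = ((f\rho)\circ\phi)\,\phi'$. First I would compute the derivatives of $g$ via the Leibniz and Faà di Bruno rules: since $g(\theta) = f(\phi(\theta))\,\rho(\phi(\theta))\,\phi'(\theta)$, the $\tau$th derivative $g^{(\tau)}(\theta)$ is a finite linear combination of terms of the form $f^{(k)}(\phi(\theta))\,\big[\text{(derivatives of }\rho\text{ composed with }\phi)\big]\,\big[\text{(products of derivatives of }\phi)\big]$ with $0 \le k \le \tau \le \alpha$. The key is that all the "non-$f$" factors — the derivatives of $\rho\circ\phi$ and the products of $\phi',\phi'',\dots$ divided by appropriate powers — can be bounded in terms of $\rho(\phi(\theta))$ itself. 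Concretely, using the chain rule, a $j$th $\theta$-derivative of $\rho\circ\phi$ is a sum of terms $\rho^{(\ell)}(\phi(\theta))$ times polynomials in $\phi^{(1)},\dots,\phi^{(j)}$; and from \eqref{eq:invphi} one has $\phi'(\theta) = c/(2\sin^2(\theta/2))$, so every $\phi^{(m)}$ is a product of $\phi'$ with a bounded trigonometric function, hence each such term is $\phi'(\theta)$ raised to a known power times a bounded function times $\rho^{(\ell)}(\phi(\theta))$.

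The crucial estimate is then: for the "$f$-free" coefficient $h_k(\theta)$ multiplying $f^{(k)}(\phi(\theta))$ in $g^{(\tau)}(\theta)$, I claim $|h_k(\theta)| \le C\,\rho(\phi(\theta))\,\phi'(\theta)\,\big(1+|\phi(\theta)|\big)^{-N}$ for arbitrarily large $N$, uniformly in $\theta$. This is where Lemma \ref{lemma:decay} and Corollary \ref{cor:decay} do the work: every factor $\rho^{(\ell)}(\phi(\theta))$ appearing can be written as $\rho^{(\ell)}(\phi(\theta)) = \big(\rho^{(\ell)}(\phi(\theta))/\rho(\phi(\theta))^{1-\epsilon}\big)\,\rho(\phi(\theta))^{1-\epsilon}$, and by Lemma \ref{lemma:decay} (with $r>1$ chosen so $1-1/r$ is small) the first factor is bounded by a rapidly decreasing function of $\phi(\theta)$; collecting the leftover powers of $\rho(\phi(\theta))$ and using Corollary \ref{cor:decay} to see $\rho^s \in \smon$ is itself rapidly decreasing, one absorbs the surplus powers of $\phi'(\theta) = c/(2\sin^2(\theta/2))$ — which blows up like $\theta^{-2}$ near $0$ and $(2\pi-\theta)^{-2}$ near $2\pi$ — against the super-polynomial decay of $\rho(\phi(\theta))$ in $|\phi(\theta)| \sim |\cot(\theta/2)| \sim 2/\theta$. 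Once this pointwise domination is in place, \eqref{eq:g-tau} follows from
\[
\|g^{(\tau)}\|_{L^2(0,2\pi)}^2 \le C \sum_{k=0}^{\tau}\int_0^{2\pi} |f^{(k)}(\phi(\theta))|^2\,\rho(\phi(\theta))^2\,\phi'(\theta)^2\,(1+|\phi(\theta)|)^{-2N}\rd\theta,
\]
and the change of variables $x = \phi(\theta)$, $\rd x = \phi'(\theta)\rd\theta$, turns each integral into $\int_\R |f^{(k)}(x)|^2\,\rho(x)^2\,\phi'(\phi^{-1}(x))\,(1+|x|)^{-2N}\rd x$; since $\phi'(\phi^{-1}(x)) = (c^2+x^2)/(2c)$ by \eqref{eq:invphi}, choosing $N \ge 1$ makes the extra factor $\rho(x)\,\phi'(\phi^{-1}(x))(1+|x|)^{-2N}$ bounded (indeed $\to 0$), leaving $\int_\R |f^{(k)}(x)|^2\rho(x)\rd x \le \|f\|_{W^{\alpha,2}_\rho(\R)}^2$. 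For the boundary limits \eqref{eq:g-boundary}, the same pointwise bound $|g^{(\tau)}(\theta)| \le C\,|f(\phi(\theta))|\cdot(\text{stuff}) + \dots$ shows that near $\theta = 0$ (so $\phi(\theta)\to -\infty$, or $+\infty$ depending on the branch) each summand is controlled by $\rho(\phi(\theta))^s\,\phi'(\theta)^m$ times a bounded function times $|f^{(k)}(\phi(\theta))|$; but $f^{(k)} \in L^2_\rho(\R)$ for $k\le\alpha$ — wait, one must be a little careful: $f^{(k)}$ need not vanish at infinity pointwise. Instead I would argue via the Sobolev embedding $W^{\alpha,2}_\rho(\R)\subset W^{1,2}_{\mathrm{loc}}(\R)$ together with the fact that $f^{(k)}$ has at most polynomial growth weighted against $\rho$ — more precisely, the finiteness of $\int_\R |f^{(k)}(x)|^2\rho(x)\rd x$ forces $\liminf |f^{(k)}(x)|^2\rho(x)(1+x^2) = 0$ but not the limit; so the clean route is: since $g \in W^{\alpha,2}(0,2\pi)$ by \eqref{eq:g-tau}, it embeds into $C^{\alpha-1}([0,2\pi])$, hence $g^{(\tau)}$ extends continuously to $[0,2\pi]$ and the one-sided limits at $0$ and $2\pi$ exist and are finite; to identify them as $0$ I use that $g^{(\tau)}(\theta)\cdot(\text{polynomial in }1/\sin(\theta/2)) $ is still in $L^2$ near the endpoints because of the extra room in $N$, which is incompatible with a nonzero limit unless that limit is $0$.

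The main obstacle, and the place to spend care, is precisely this endpoint analysis: proving the super-polynomial decay of the coefficients $h_k(\theta)$ fast enough to beat \emph{all} the negative powers of $\sin(\theta/2)$ generated by differentiating $\phi'$ repeatedly (the worst term in $g^{(\tau)}$ carries $\phi'$ to the power $\tau+1$, i.e. $\sin(\theta/2)^{-2(\tau+1)}$), and then converting "extra $L^2$-integrability of $g^{(\tau)}$ against a blowing-up weight near the endpoints" into the vanishing of the boundary trace. The arithmetic bookkeeping in Faà di Bruno — tracking which power of $\rho(\phi(\theta))$ survives after applying Lemma \ref{lemma:decay} to each factor — is routine but must be done carefully enough to guarantee a net surplus of decay; I would state the needed combinatorial identity (that a generic term in $g^{(\tau)}$ has the shape $f^{(k)}(\phi)\cdot\prod_i \rho^{(\ell_i)}(\phi)\cdot \phi'^{\,p}\cdot(\text{bounded})$ with $\sum_i \ell_i + (\text{shortfall from }\phi\text{-derivatives}) = \tau - k$ and $p \le \tau+1$) as a separate induction, relegating its proof to Appendix \ref{ap:induction} in the spirit of the paper's other technical inductions, and then feed it into the two estimates above. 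Finally, \eqref{eq:g-tau} for $\tau = 0,\dots,\alpha$ together with \eqref{eq:g-boundary} for $\tau=0,\dots,\alpha-1$ gives $g\in W^{\alpha,2}_{\rm per}(0,2\pi)$ directly from the definition \eqref{eq:Lq-per-sob2}.
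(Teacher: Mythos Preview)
Your overall architecture---Fa\`a di Bruno expansion, then control the ``$f$-free'' coefficients via Lemma~\ref{lemma:decay}, then change variables---matches the paper. But the specific pointwise bound you assert,
\[
|h_k(\theta)| \le C\,\rho(\phi(\theta))\,\phi'(\theta)\,(1+|\phi(\theta)|)^{-N},
\]
is false, and the error propagates into your displayed $L^2$ estimate. Already at $\tau=1$ one term is $h_1(\theta)=\rho(\phi(\theta))\,(\phi'(\theta))^2$; since $\phi'(\theta)=(c^2+\phi(\theta)^2)/(2c)$ grows like $(1+|\phi|)^2$, your claim would force $\phi'\le C(1+|\phi|)^{-N}$, which fails. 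The root cause is that Lemma~\ref{lemma:decay} gives only $|\rho^{(\beta)}|^r/\rho$ rapidly decreasing for $r>1$, i.e.\ at best $|\rho^{(\beta)}|\le C\rho^{1/r}(1+|x|)^{-M}$ with $1/r<1$; you can never recover a full power of $\rho$ pointwise, and for the $\tau_2=0$ terms the surplus $\phi'$-powers must eat into $\rho$ itself. The paper sidesteps this by never writing a pointwise bound on $h_k$: it squares first and estimates the $L^2$ norm directly, so that the supremum to control is $(\phi^{-1})'(x)\,\sin^{-2(\eta+2)}(\phi^{-1}(x)/2)\cdot|\rho^{(\tau_2)}(x)|^2/\rho(x)$, which is polynomial times a quantity Lemma~\ref{lemma:decay} (with $r=2$) makes rapidly decreasing. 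Your argument is salvageable in the same way: the correct pointwise claim is $|h_k|^2\le C\,\rho(\phi)\,\phi'\,(1+|\phi|)^{-N}$ (one power of $\rho\phi'$, not two), after which the change of variables yields exactly $\int|f^{(k)}|^2\rho\,\rd x$.

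Your boundary argument, once the $L^2$ estimates are fixed, is a genuinely different route from the paper's and it works. The paper proves $g^{(\tau)}(\theta)\to 0$ by writing each term as $\big(f^{(\tau_1)}\rho^{1/2+\varepsilon}\big)\circ\phi$ times a factor shown to vanish, and then proving the first factor is bounded via $f^{(\tau_1)}\rho^{1/2+\varepsilon}\in W^{1,2}(\R)\hookrightarrow L^\infty(\R)$. Your idea---observe that the same computation gives $g^{(\tau)}/\sin(\theta/2)\in L^2(0,2\pi)$ (there is room to spare in the supremum), combine this with the existence of the endpoint limits from $g\in W^{\alpha,2}(0,2\pi)\hookrightarrow C^{\alpha-1}([0,2\pi])$, and conclude the limits must be zero since a nonzero limit would make $g^{(\tau)}/\sin(\theta/2)$ fail to be square-integrable near the endpoint---is clean and avoids the separate $W^{1,2}(\R)$ estimate. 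Just be sure to state explicitly that the extra $1/\sin(\theta/2)$ is absorbed by the same rapidly-decreasing factor, which requires $\tau\le\alpha-1$ only through the Sobolev embedding, not through the $L^2$ bound.
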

\begin{proof}
Let us start with two auxiliary results that are proved by straightforward induction arguments in Appendix~\ref{ap:induction}. First, the weak derivative $g^{(\tau)}$, $\tau \in \N_0$, on the open interval $(0,2\pi)$ is a finite linear combination of terms of the form
\begin{equation}
\label{eq:ind1}
    \big((f^{(\tau_1)} \rho^{(\tau_2)}) \circ \phi \big) \prod_{j=1}^{\tau_1 + \tau_2+1} \phi^{(\tau_{3,j})}
\end{equation}
    where the nonnegative integers $\tau_1$, $\tau_2$, and $\tau_{3,j}$ satisfy 
    \[
    \tau_1 + \tau_2 \leq \tau \quad \text{and} \quad \sum_{j=1}^{\tau_1 + \tau_2 +1} \tau_{3,j} = \tau+1.
    \]
Secondly, 
\begin{equation}
\label{eq:ind2}
    \phi^{(\tau)}(\theta) = \frac{\psi_\tau(\theta)}{\sin^{\tau+1}(\theta/2)}, \qquad \tau \in \N_0,
\end{equation}
where $\psi_\tau \in C^\infty(\R)$ is a bounded finite linear combination of products of trigonometric functions. In particular, $g^{(\tau)}$ is continuous on $(0,2\pi)$ for $\tau=0,\ldots,\alpha-1$.

Let us first consider \eqref{eq:g-tau}. Combining \eqref{eq:ind1} and \eqref{eq:ind2}, it follows from the triangle inequality that it is, in fact, sufficient to prove \eqref{eq:g-tau} with  
   \[
    \widetilde{g}_\tau(\theta) = \frac{(f^{(\tau_1)} \rho^{(\tau_2)}) \circ \phi(\theta)}{\sin^{\eta+2}(\theta /2)}, \qquad \tau_1 + \tau_2 \leq \tau \leq \alpha, \ \ \eta = \tau + \tau_1 + \tau_2 \leq 2\tau,
    \]
replacing $g^{(\tau)}$. A direct calculation gives,
\begin{align*}
\big\|\widetilde{g}_\tau(\theta) \big\|_{L^2(0,2 \pi)}^2 &= 
    \int_0 ^{2\pi} \left|f^{(\tau_1)}(\phi(\theta)) \, \rho^{(\tau_2)}(\phi(\theta)) \, \frac{1}{\sin^{\eta+2}(\theta/2)}\right|^2 \rd \theta \\[1mm]
    & =
  \int_{\R} \left|f^{(\tau_1)}(x) \, \rho^{(\tau_2)}(x) \, \frac{1}{\sin^{\eta + 2 }(\phi^{-1}(x)/2)} \right|^2 \big|(\phi^{-1} ) ' (x)\big| \rd x
    \\[1mm]
    &\le 
    \int_{\R} \big|f^{(\tau_1)}(x) \big|^2 \rho(x) \rd x \; \sup_{x\in\R}  \left|\frac{(\phi^{-1})'(x)}{\sin^{2(\eta+2)}(\phi^{-1}(x)/2)} \frac{(\rho^{(\tau_2)}(x))^2}{\rho(x)}\right|     \\[1mm] 
    & \le C_{\tau, \tau_2} \|f\|_{W^{\alpha,2}_{\rho}(\R)}^2,
\end{align*}
where the last step follows from Lemma~\ref{lemma:decay} after expanding
\begin{align}
\label{eq:inv_sin}
\frac{(\phi^{-1})'(x)}{\sin^{2(\eta+2)}(\phi^{-1}(x)/2)} &= \frac{2 c}{c^2 + x^2} \frac{1}{\sin^{2(\eta + 2)}({\rm arccot}(-x/c))} \nonumber \\[1mm]
&= \frac{2}{c^{2\eta + 3}} \big(c^2 + x^2 \big)^{\eta+1}  
\end{align}
by virtue of \eqref{eq:invphi} and basic trigonometry. Hence, $g \in W^{\alpha, 2}(0,2 \pi)$.

To establish~\eqref{eq:g-boundary}, we demonstrate that any term of the form \eqref{eq:ind1} approaches $0$ as $\theta\to0^{\pm}$ for $\tau=0,\dots,\alpha-1$; here and in what follows, $\theta\to0^{-}$ is to be understood as the limit $\theta\to 2 \pi^-$ on the open interval $(0,2 \pi)$.
To this end, consider the following bound for any $\varepsilon\in(0,1/2)$:
\begin{align}\label{eq:g-decomp}
&\Bigg|\big((f^{(\tau_1)} \rho^{(\tau_2)}) \circ \phi (\theta) \big) \prod_{j=1}^{\tau_1 + \tau_2+1} \phi^{(\tau_{3,j})} (\theta)\Bigg|
\\[1mm]
&\le 
\left|\big(f^{(\tau_1)} \rho^{1/2+\varepsilon}\big) \circ \phi (\theta)  \right| \; \Bigg|  \frac{\rho^{(\tau_2)}}{\rho^{1/2+\varepsilon}} \circ \phi(\theta) \prod_{j=1}^{\tau_1 + \tau_2+1} \phi^{(\tau_{3,j})} (\theta)\Bigg|.\nonumber
\end{align}
Our plan is to show that the first term on the right-hand side of \eqref{eq:g-decomp} is bounded and the second one tends to zero as $\theta\to0^{\pm}$. 

Define 
\[
 h_{\tau_1} 
 \coloneqq  
f^{(\tau_1)} \rho^{1/2+\varepsilon}, \qquad \tau_1=0,\ldots,\alpha-1,
\]
which are continuous functions on $\R$. Our aim is to prove  that $h_{\tau_1} \in W^{1,2}(\R)$, so that one can conclude 
$$
\|h_{\tau_1} \|_{L^{\infty}(\R)}\le C \|h_{\tau_1} \|_{ W^{1,2}(\R)}<\infty
$$
by virtue of the Sobolev inequality~\cite[Theorem~5.4]{Adams1975}. First,
\begin{align}
\label{eq:h-estimate}
\int_\R  |h_{\tau_1} (x)|^2 \rd x 
&=
\int_\R  \big|f^{(\tau_1)}(x) \big|^2 \rho(x)^{1+2\varepsilon} \rd x \\[1mm]
&\le
\big\|f^{(\tau_1)} \big\|^2_{L^2_\rho(\R)} \, \sup_{x\in\R} \rho(x)^{2\varepsilon} \; 
<\infty
\end{align}
due to $\|f^{(\tau_1)}\|_{L^2_\rho(\R)}\le\|f\|_{W^{2,\alpha}_\rho(\R)}<\infty$.
To estimate $\|h'_{\tau_1}\|_{L^2(\R)}$, we first employ the triangle inequality:
\begin{align}
\label{eq:triangle}
\|h'_{\tau_1}\|_{L^2(\R)}& = \big\|f^{(\tau_1+1)} \rho^{1/2+\varepsilon}+(1/2+\varepsilon)f^{(\tau_1)} \rho^{\varepsilon-1/2} \rho' \big \|_{L^2(\R)} \\[1mm]
&\le
\big \|f^{(\tau_1+1)} \rho^{1/2+\varepsilon} \big\|_{L^2(\R)} +(1/2+\varepsilon)\big\|f^{(\tau_1)} \rho^{\varepsilon-1/2} \rho' \big\|_{L^2(\R)} 
.
\end{align}
Through the same line of reasoning as in \eqref{eq:h-estimate}, we obtain
\[
\big \|f^{(\tau_1+1)} \rho^{1/2+\varepsilon} \big\|_{L^2(\R)}^2
\le 
\big \|f^{(\tau_1+1)} \big\|^2_{L^2_\rho(\R)} \, \sup_{x\in\R} \rho  (x)^{2\varepsilon}
<\infty.
\]
Moreover, the second term on the right-hand side of \eqref{eq:triangle} satisfies
\begin{align*}
\big \|f^{(\tau_1)} \rho^{\varepsilon-1/2} \rho' \big \|^2_{L^2(\R)} =
&\int_\R \big|f^{(\tau_1)}(x)\big|^2 \rho(x) \frac{ \rho'(x)^2}{\rho(x)^{2-2\varepsilon}} \rd x
\\[1mm]
&\le 
\sup_{x\in\R}\left(\frac{ \rho'(x)^2}{\rho(x)^{2-2\varepsilon}}\right)\; \big \|f^{(\tau_1)} \big \|^2_{L^2_\rho(\R)}
<\infty,
\end{align*}
where the last inequality follows from Lemma~\ref{lemma:decay}. Hence, 
\begin{equation}
\sup_{\theta \in (0,2\pi)} \Big| \big(f^{(\tau_1)} \rho^{1/2+\varepsilon}\big) \circ \phi(\theta) \Big|  = \|h_{\tau_1}\|_{L^{\infty}(\R)} <\infty, 
\end{equation}
which demonstrates the boundedness of the first term on the right-hand side of~\eqref{eq:g-decomp}.

 To conclude the proof of \eqref{eq:g-boundary}, consider the second term on the right-hand side of \eqref{eq:g-decomp}. By Lemma~\ref{lemma:decay}, we know that 
 $$
 \lim_{x \to \pm \infty} x^\gamma \frac{\rho^{(\tau_2)}(x)}{\rho(x)^{1/2+\varepsilon}} = 0
 $$ 
 for any $\gamma \in \N_0$. Combining this observation with \eqref{eq:ind2} yields
\[
\lim_{\theta\to0^\pm} \Bigg| \frac{\rho^{(\tau_2)}}{\rho^{1/2+\varepsilon}} \circ \phi(\theta) \prod_{j=1}^{\tau_1 + \tau_2+1} \phi^{(\tau_{3,j})} (\theta)\Bigg| =0,
\]
which establishes \eqref{eq:g-boundary}; see also \eqref{eq:inv_sin}.

The final conclusion that $g\in W^{\alpha,2}_{\rm per}(0,2\pi)$ follows from the definition~\eqref{eq:Lq-per-sob2}. 
\end{proof}

We demonstrate our method numerically for the integrand function $f(x)=|x|^p$, with $p\in\{1,3,5\}$, which is $p$ times weakly differentiable and in $W_\rho^{p,2}(\R)$, but not in $W_\rho^{p+1,2}(\R)$ for $\rho \in \smon$. We use the value $c=1$ for the free parameter in the M\"obius transformation but note that its choice does not seem to have an effect on the observed asymptotic convergence rates. Two rapidly decreasing weights are considered: the standard Gaussian $\rho(x)=\re^{-x^2/2}/\sqrt{2\pi}$ and logistic $\rho(x)=\re^{-x}/(1+\re^{-x})^2$ densities. The tests are performed on Matlab 2022b with double precision arithmetic. 

Figure~\ref{fig:int-test}  compares the error convergence for the M\"obius-transformed trapezoidal rule for the Gaussian weight with two other methods, i.e., the Gauss--Hermite quadrature and the trapezoidal rule with a cut-off from \cite{KSG2023}.
Regarding the logistic distribution, Figure~\ref{fig:int-test2} shows a comparison between the M\"obius-transformed trapezoidal rule and the Gauss--Logistic quadrature for which we use the Matlab software by Walter Gautschi \cite{G2020}. Interestingly, Gaussian quadrature shows much slower rate of convergence in the latter case. This slow convergence of the Gauss--Logistic quadrature is not due to the mere double precision in our computations: if the higher precision supported by the implementation of the Gauss--Logistic quadrature is used, the decay rate of the error remains the same.
\begin{figure}
\centering
 \includegraphics{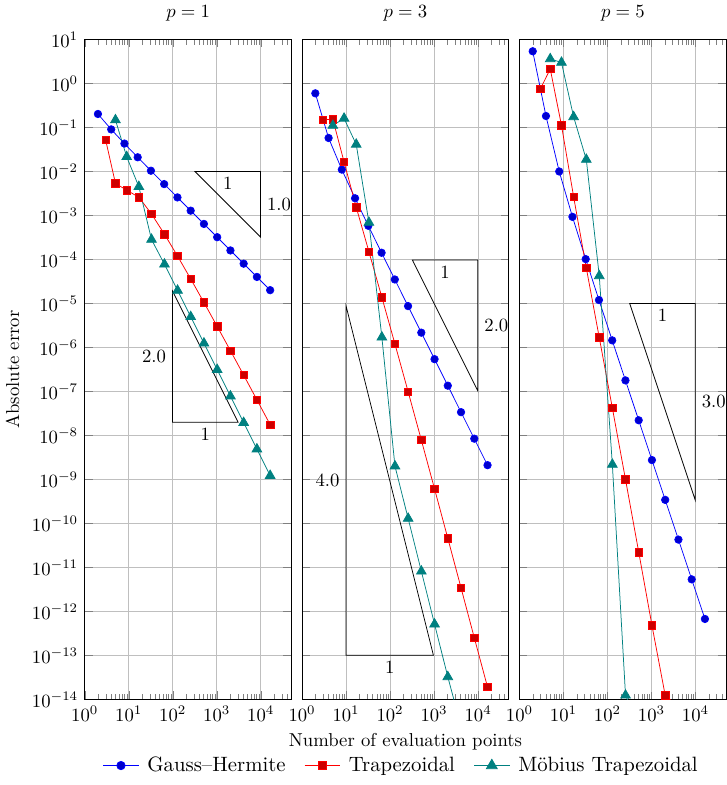}
\caption{Absolute integration error for the Gaussian weight and $f(x)=|x|^p$, which corresponds to $I_\rho(f) = (2^p/\pi)^{1/2}\Gamma((p+1)/2)$ where $\Gamma$ is the Gamma function. The blue line shows the error for the Gauss--Hermite quadrature and the red line for the trapezoidal rule with a cut-off from \cite{KSG2023}. The M\"{o}bius-transformed trapezoidal rule (green) achieves the fastest convergence of the error.}
\label{fig:int-test}
\end{figure}

\begin{figure}
\centering
 \includegraphics{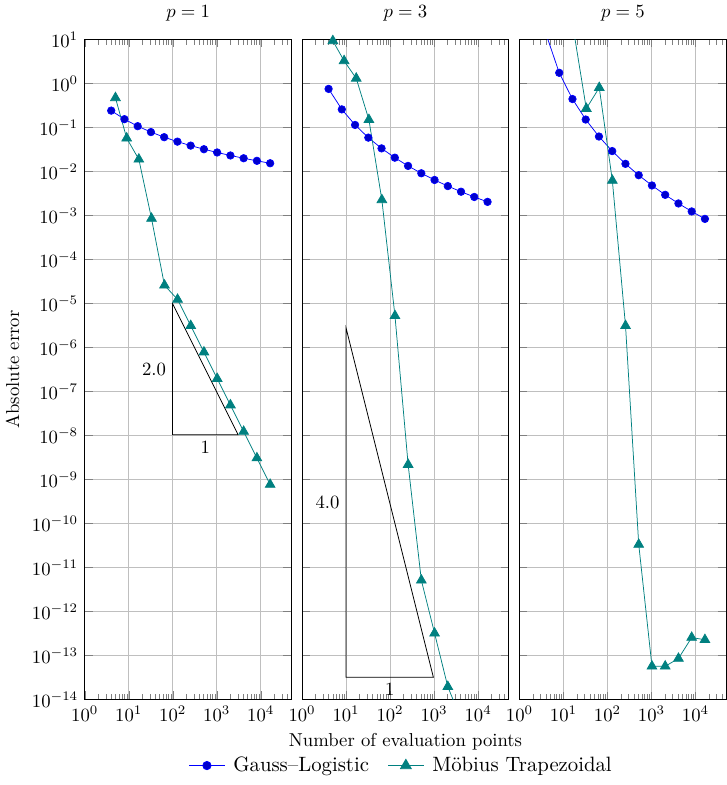}
\caption{Absolute integration error for the logistic weight and $f(x)=|x|^p$, which corresponds to $I_\rho(f) = -2 \,  p! \, {\rm Li}_p(-1)$ where ${\rm Li}$ denotes the polylogarithm \cite[Corollary~4.2]{Jodra2014}. The blue line shows the error for the Gauss--Logistic quadrature from \cite{G2020}. The M\"{o}bius-transformed trapezoidal rule (green) exhibits much faster convergence.}
\label{fig:int-test2}
\end{figure}

\begin{remark}[Implementation]
From a standpoint of an implementation on a computer, it should be pointed out that certain function evaluations in \eqref{eq:quadrature} at the M\"obius-transformed equidistant points on the unit circle may cause numerical errors due to the unbounded values of $\phi$ and $\phi'$ at $0$ and $2\pi$. This problem can be easily circumvented in two alternative ways: (i) by \eqref{eq:g-boundary} we know that $g(0)=g(2\pi)=0$, which means that one can set the integrand function to zero at $0$ and $2\pi$ without actually evaluating $\phi$ or $\phi'$ at those points; or (ii) one can add a shift $\Delta\in(0,2\pi/n)$ to all equidistant quadrature points on the unit circle, which leads to evaluating the integrand in \eqref{eq:quadrature} at $\theta_j'=2\pi j/n + \Delta$ for $j=1,\ldots,n$.

    Moreover, since our M\"obius-transformed quadrature is based on the trapezoidal rule on the unit circle, it is straightforward to implement a nested/embedded version of the algorithm such that one can reuse previous function evaluations when increasing the number of quadrature points $n$ in \eqref{eq:quadrature}.
\end{remark}

To support our claim that the M\"obius-transformed trapezoidal rule achieves the optimal rate of worst-case error amongst any linear quadrature for the considered integration problem with a monotonic Schwartz weight $ \rho \in \smon$, we still need to deduce a matching general lower bound for this class of algorithms. We follow the argument in \cite[Theorem~2.3]{DK2023}, where the authors consider the standard Gaussian weight. 
\begin{proposition}[General lower bound]
    Let $\rho\in\smon$, $1<q<\infty$, and $\alpha\in\N$. Then, for any linear quadrature of the form 
    \[A_{\rho,n}(f)\coloneqq \sum_{j=1}^n w_j f(x_j), \qquad w_j\in\R,\;\;x_j\in\R, \]
    we have
    \[
    \big \| I_\rho - A_{\rho,n} \big\|_{\mathscr{L}(W_\rho^{\alpha,q}(\R), \C)} =  \sup_{0\ne f\in W_\rho^{\alpha,q}(\R)} \frac{\left| I_\rho (f)-A_{\rho,n}(f)\right|}{\|f\|_{W_\rho^{\alpha,q}(\R)}}\ge C \frac{1}{n^{\alpha}},
    \]
    where the constant $C$ is independent of $n$.
\end{proposition}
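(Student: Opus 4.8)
The plan is to use the standard fooling-function argument, following \cite[Theorem~2.3]{DK2023}, but with a \emph{superposition of many localized bumps} in place of a single one: a single bump supported in a gap of length $\sim 1/n$ between consecutive nodes only yields the suboptimal rate $n^{-(\alpha+1-1/q)}$, so the superposition is essential to reach the rate $n^{-\alpha}$.

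First I would fix a compact interval, say $[-1,1]$, on which the continuous and positive weight $\rho$ obeys $0 < m \le \rho(x) \le \overline{m} < \infty$; only continuity and positivity of $\rho \in \smon$ are needed here, not its decay. Partition $[-1,1]$ into $N = 2n$ subintervals $I_k = (c_k - h, c_k + h)$ of common length $2h = 1/n$. Since each of the $n$ nodes $x_1,\dots,x_n$ lies in the open interior of at most one subinterval, at least $n$ of the intervals, indexed by a set $S$ with $\lvert S \rvert \ge n$, contain no node in their interior.

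Next I would fix once and for all a bump $\psi \in C_c^{\infty}(-1,1)$ with $\psi \ge 0$ and $\psi \not\equiv 0$, and set
\[
f(x) \coloneqq \sum_{k \in S} h^{\alpha}\, \psi\!\bigl(\tfrac{x - c_k}{h}\bigr) .
\]
The summands have pairwise disjoint supports, each contained in $I_k$ with $k \in S$, and — because $\psi$ is supported in the \emph{open} interval $(-1,1)$ — each summand vanishes at every node $x_j$ as well as at the endpoints $c_k \pm h$; hence $A_{\rho,n}(f) = 0$. By disjointness of supports,
\[
\|f\|_{W^{\alpha,q}_{\rho}(\R)}^q = \sum_{\tau=0}^{\alpha} \sum_{k \in S} \int_{\R} h^{(\alpha - \tau)q}\, \bigl\lvert \psi^{(\tau)}\!\bigl(\tfrac{x - c_k}{h}\bigr) \bigr\rvert^q \rho(x) \rd x ,
\]
and the substitution $x = c_k + h t$ turns each inner integral into $h^{(\alpha-\tau)q+1} \int_{-1}^{1} \lvert \psi^{(\tau)}(t) \rvert^q \rho(c_k + h t) \rd t \le \overline{m}\, \|\psi^{(\tau)}\|_{L^q(\R)}^q\, h^{(\alpha-\tau)q+1}$. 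Summing over the $\lvert S \rvert \le 2n$ indices and inserting $h = 1/(2n)$ gives, for each $\tau \le \alpha$, a contribution of order $n \cdot n^{-(\alpha-\tau)q-1} = n^{-(\alpha-\tau)q} \le 1$, so that $\|f\|_{W^{\alpha,q}_{\rho}(\R)} \le C$ with $C$ independent of $n$. This is precisely where the per-bump scaling $h^{\alpha}$ is calibrated.

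Finally, since $\psi \ge 0$ and $\rho \ge m > 0$ on $[-1,1]$, the same substitution yields
\[
I_\rho(f) = \sum_{k \in S} h^{\alpha+1} \int_{-1}^{1} \psi(t)\, \rho(c_k + h t) \rd t \ge \lvert S \rvert\, h^{\alpha+1}\, m\, \|\psi\|_{L^1(\R)} \ge c\, n^{-\alpha}
\]
for a constant $c > 0$ independent of $n$. Combining this with $A_{\rho,n}(f) = 0$ and the norm bound above gives $\lvert I_\rho(f) - A_{\rho,n}(f) \rvert / \|f\|_{W^{\alpha,q}_{\rho}(\R)} \ge (c/C)\, n^{-\alpha}$, which is the claim. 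The hard part is not an analytic difficulty but the bookkeeping: one must guarantee that a number of node-free subintervals growing linearly in $n$ is available (which forces the choice $N = 2n$ rather than $N = n+1$) and that the scaling $h^{\alpha}$ and the number $\sim n$ of bumps balance so that the Sobolev norm stays bounded while the weighted integral remains of order $n^{-\alpha}$; the minor issue of nodes lying on subinterval boundaries is resolved by supporting $\psi$ in the open interval.
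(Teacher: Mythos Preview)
Your argument is correct. The bump-counting, the scaling $h^{\alpha}$, and the resulting balance between the Sobolev norm (which stays bounded because the dominant contribution comes from $\tau=\alpha$, giving $|S|\cdot h \asymp 1$) and the weighted integral (of size $|S|\cdot h^{\alpha+1}\asymp n^{-\alpha}$) are all handled cleanly. The remark that $\psi$ has compact support in the open interval $(-1,1)$ indeed disposes of nodes on subinterval boundaries.

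The paper takes a different, more modular route: instead of building the fooling function from scratch, it embeds $\mathring{W}^{\alpha,q}(0,1)$ into $W^{\alpha,q}_\rho(\R)$ by zero extension, compares the weighted and unweighted norms via $\max_{[0,1]}\rho$, absorbs the weight in the integral by the linear homeomorphism $f\mapsto f/\rho$ on $\mathring{W}^{\alpha,q}(0,1)$, and then invokes the classical unweighted lower bound of order $n^{-\alpha}$ from~\cite{T1990} as a black box. Your direct construction is essentially that classical argument unrolled and adapted in situ to the weighted setting; it is self-contained and makes the mechanism transparent, whereas the paper's reduction is shorter and cleanly isolates the only property of $\rho$ that matters here (positivity and smoothness on a compact interval). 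Both approaches use nothing about $\rho$ beyond its continuity and positivity on a bounded interval.
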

\begin{proof}
    The space $\mathring{W}^{\alpha,q}(0,1)$ can be interpreted as a subspace of $W^{\alpha,q}(\R)$ via zero continuation of its elements onto $\R\setminus (0,1)$ \cite[Lemma~3.22]{Adams1975}, and thus also $\mathring{W}^{\alpha,q}(0,1) \subset W^{\alpha,p}_{\rho}(\R)$. Since
    \[
    {\max_{x\in[0,1]}\rho(x)^{1/q}} \, \Big( \int_0 ^1  \big| g(x) \big|^q \rd x \Big)^{1/q} \ge  \Big( \int_\R  \big| g(x) \big|^q \rho(x) \rd x \Big)^{1/q}
    \]
    for all $g\in L^{q}(0,1)$, including the first $\alpha$ partial derivatives of $f\in \mathring{W}_\rho^{\alpha,q}(0,1)$, we have
    \begin{align}
    \label{eq:lowerbound}
    &\sup_{0\ne f\in W_\rho^{\alpha,q}(\R)} \frac{\left| I_\rho (f)-A_{\rho,n} (f) \right|}{\|f\|_{W_\rho^{\alpha,q}(\R)}}
 \nonumber   \\& \kern2cm \ge
    \frac{1}{\max_{x\in[0,1]}\rho(x)^{1/q}} \, 
\sup_{0\ne f\in \mathring{W}^{\alpha,q}(0,1)} \frac{\left| I_\rho (f) -A_{\rho,n} (f) \right|}{\|f\|_{W^{\alpha,q}(0,1)}} \nonumber \\[1mm]
& \kern2cm =   \frac{1}{\max_{x\in[0,1]}\rho(x)^{1/q}}  \, \sup_{0\ne f\in \mathring{W}^{\alpha,q}(0,1)} \frac{\left| I(f) -A_{\rho,n} (f/\rho) \right|}{\|f/\rho \|_{W^{\alpha,q}(0,1)}},  
    \end{align}
    where $I(f)$ denotes the unweighted integral of $f$ over $\R$ and the last step is a consequence of the mapping $R: f \mapsto f/\rho$ being a linear homeomorphism on $\mathring{W}^{\alpha,q}(0,1)$ due to the positivity and smoothness of $\rho$ on $[0,1]$. In particular,
    \begin{align}
    &\sup_{0\ne f\in \mathring{W}^{\alpha,q}(0,1)}  \frac{\left| I(f) -A_{\rho,n} (f/\rho) \right|}{\|f/\rho \|_{W^{\alpha,q}(0,1)}} \\
    & \qquad \geq \frac{1}{\|R \|_{\mathscr{L}(\mathring{W}^{\alpha,q}(0,1))}} \, \sup_{0\ne f\in \mathring{W}^{\alpha,q}(0,1)} \frac{\left| I(f) -A_{\rho,n} (f/\rho) \right|}{\|f \|_{W^{\alpha,q}(0,1)}} \geq C_{\rho,\alpha} \frac{1}{n^{\alpha}},
    \end{align}
    where the last inequality corresponds to a lower bound for the accuracy of linear quadrature rules for unweighted integrals of functions in $\mathring{W}^{\alpha,q}(0,1)$ over $(0,1)$ \cite{T1990}. Combined with \eqref{eq:lowerbound}, this proves the claim.
\end{proof}

\section{Randomized trapezoidal rule}\label{sec:rand}
The considered integration problem for the important special case of a Gaussian weight is tackled by randomized algorithms in \cite{GKS2023}. In particular, the best attainable worst-case root-mean-squared error (RMSE), amongst any possibly nonlinear or adaptive algorithm, is proved to be of order $n^{-\alpha-1/2}$ \cite[Theorem~2.1]{GKS2023}. Using our M\"{o}bius-transformed trapezoidal rule, with a suitable randomization, one can attain this optimal rate, without a logarithmic multiplicative factor as in \cite[Theorem~3.3]{GKS2023} for a truncated randomized trapezoidal rule. 

In the following, we call $A$ a \emph{randomized algorithm}, which is a pair of a probability space $(\Omega, \Sigma, \mu)$  and a family of mappings $(A^{\omega})_{\omega\in\Omega}$, when (i) each fixed $\omega\in\Omega$ defines a deterministic algorithm $A^{\omega}$ and (ii) the number of nodes used for each fixed integrand $f$ is measurable with respect to $\omega$.
We define the worst-case RMSE for  a randomized algorithm $A$ on $W^{\alpha,2}_{\rho}(\R)$ as
   \[
   e^{{\rmse}}\big(A,W^{\alpha,2}_{\rho}(\R)\big)
   \coloneqq
   \sup_{0\ne f\in W^{\alpha,2}_{\rho}(\R)} \frac{\Big(\int_{\Omega} \big|I_{\rho}(f)-A^{\omega} (f) \big|^2 \rd \mu(\omega) \Big)^{1/2}}{\|f\|_{W^{\alpha,2}_{\rho}(\R)}}.
   \]

\begin{definition}[Randomized M\"{o}bius-transformed trapezoidal rule]
Let $M$ be an integer-valued random variable that is distributed uniformly over $\{\lfloor\frac{n}{2}\rfloor,\ldots,n\}$, and let $\delta$ be a uniformly distributed random variable on $[0,1]$. Assume that $M$ and $\delta$ are mutually independent. For a continuous function $f:\R\to\C$, we define a weighted randomized M\"{o}bius-transformed trapezoidal rule $A_{n,\RMT}=(A_{n,\RMT}^{M ,\delta})_{{M ,\delta}}$ by 
\begin{equation}\label{def:rand-MRT}
    A_{n,\RMT}^{M ,\delta} (f)
    \coloneqq
    \frac{2\pi}{M } \sum_{j=0}^{M -1}
    f(\phi(\theta_j))\, \rho(\phi(\theta_j))\, \phi'(\theta_j)
    ,
\end{equation}
    where the integration nodes are defined as $\theta_j\coloneqq 2\pi(j+\delta)/M $.
\end{definition}

The convergence rate $n^{-\alpha-1/2}$ of the RMSE for the randomized quadrature rule \eqref{def:rand-MRT} is a direct consequence of Lemma~\ref{thm:f-rho-norm} combined with the classical result by Bakhvalov, \cite{B1961} or \cite[Theorem~11]{KKNU2019} where multidimensional settings are considered. Since our setting is one-dimensional, we refer to \cite[p. 1670]{GKS2023} for a better bound on the RMSE for $2T$-periodic functions. For completeness, we formally state this result as follows:

\begin{theorem}[Upper bound on randomized integration]
   For $\rho \in \mathcal{S}_+^{\rm mon}$ and $\alpha \in \N$, the randomized M\"{o}bius-transformed trapezoidal rule \eqref{def:rand-MRT} satisfies 
   \[
   e^{{\rmse}}\big(A_{n,\RMT},W^{\alpha,2}_{\rho}(\R)\big)
   \le
   C n^{-\alpha-1/2},
   \]
   where $C>0$ is independent of $n$.
\end{theorem}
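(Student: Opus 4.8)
The plan is to transport the problem to the torus exactly as in the deterministic case and then quote a classical randomized-integration bound for periodic Sobolev functions. Given $f\in W^{\alpha,2}_\rho(\R)$, I would set $g\coloneqq((f\rho)\circ\phi)\,\phi'$ and invoke Lemma~\ref{thm:f-rho-norm}, which yields $g\in W^{\alpha,2}_{\rm per}(0,2\pi)\cong W^{\alpha,2}(\T)$ together with the norm estimate $\|g\|_{W^{\alpha,2}(0,2\pi)}\le C_{\rho,\alpha}\|f\|_{W^{\alpha,2}_\rho(\R)}$. By the defining formula \eqref{def:rand-MRT}, for every realization $(M,\delta)$ and $\theta_j=2\pi(j+\delta)/M$ one has
\[
A_{n,\RMT}^{M,\delta}(f)=\frac{2\pi}{M}\sum_{j=0}^{M-1}g(\theta_j),\qquad I_\rho(f)=\int_0^{2\pi}g(\theta)\rd\theta,
\]
with the point evaluations taken for the continuous representative of $g$; since $g(0)=g(2\pi)=0$ by \eqref{eq:g-boundary}, the blow-up of $\phi$ and $\phi'$ at the endpoints is harmless. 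Hence $I_\rho(f)-A_{n,\RMT}^{M,\delta}(f)$ is exactly the error of the randomly shifted rectangle rule with uniformly random cardinality $M\in\{\lfloor n/2\rfloor,\ldots,n\}$ applied to the periodic function $g$.

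Next I would recall the standard Fourier-analytic estimate for this rule. Conditioning on $M$ and expanding $g(\theta)=\sum_{k\in\Z}\hat g_k\re^{\ri k\theta}$, the random shift makes the estimator conditionally unbiased, and Parseval gives
\[
\E_\delta\big[\,\big|I_\rho(f)-A_{n,\RMT}^{M,\delta}(f)\big|^2\,\big]=4\pi^2\sum_{m\ne 0}|\hat g_{Mm}|^2\le \frac{C}{M^{2\alpha}}\sum_{k:\,M\mid k}(1+|k|)^{2\alpha}|\hat g_k|^2,
\]
using $|Mm|\ge M$ for $m\ne0$ and $\|g\|_{W^{\alpha,2}(\T)}^2\asymp\sum_k(1+|k|)^{2\alpha}|\hat g_k|^2$. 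Averaging over $M$ uniform on $J_n\coloneqq\{\lfloor n/2\rfloor,\ldots,n\}$, which has $|J_n|\ge n/2$ elements, each index $k$ contributes only through the $M\in J_n$ dividing it, and this extra averaging produces the decisive gain of a further factor $n^{-1}$, so that $\E_{M,\delta}\big[\,|I_\rho(f)-A_{n,\RMT}^{M,\delta}(f)|^2\,\big]\le C\,n^{-2\alpha-1}\|g\|_{W^{\alpha,2}(\T)}^2$. The clean version of this averaging step, keeping the exponent exactly $2\alpha+1$, is Bakhvalov's classical result \cite{B1961}; see \cite[Theorem~11]{KKNU2019} for a multivariate formulation and \cite[p.~1670]{GKS2023} for the sharp univariate periodic statement I would cite. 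Combining with the norm bound from Lemma~\ref{thm:f-rho-norm}, dividing by $\|f\|_{W^{\alpha,2}_\rho(\R)}$ and taking the supremum over $0\ne f$ — and noting that $A_{n,\RMT}$ uses at most $n$ nodes — then gives $e^{\rmse}(A_{n,\RMT},W^{\alpha,2}_\rho(\R))\le C\,n^{-\alpha-1/2}$.

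The only step requiring real care is the first paragraph: verifying that the randomization in \eqref{def:rand-MRT} genuinely corresponds to the randomly shifted, random-cardinality trapezoidal rule for the periodic transplant $g$, and that it is the image-space norm controlled by Lemma~\ref{thm:f-rho-norm} that feeds the classical estimate. I do not anticipate a serious obstacle, since the substance is already in Lemma~\ref{thm:f-rho-norm} and the periodic randomized bound is off-the-shelf; the one place where naive bookkeeping would cost an $\varepsilon$ in the exponent — the divisor-counting in the average over $M$ — is precisely what the cited Bakhvalov-type results handle, so I would invoke them rather than reprove them.
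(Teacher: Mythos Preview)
Your proposal is correct and follows essentially the same route as the paper: transplant the integrand to $\T$ via Lemma~\ref{thm:f-rho-norm}, observe that \eqref{def:rand-MRT} is precisely the randomly shifted, random-cardinality trapezoidal rule applied to $g$, and then invoke the classical Bakhvalov-type bound (citing \cite{B1961}, \cite[Theorem~11]{KKNU2019}, and \cite[p.~1670]{GKS2023}). The paper records exactly this in the paragraph preceding the theorem and does not spell out the Fourier-analytic computation you sketch; your additional detail is sound and the deferral to the cited results for the divisor-averaging step is the same choice the authors make.
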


\section{$L^p_{\rho}$ approximation}\label{sec:app}
In this section, we consider approximating functions in the $L^p_{\rho}(\R)$-norm. More precisely, we aim to construct an algorithm $A_n: W^{\alpha,q}_{\rho}(\R) \to L^p_{\rho}(\R)$ using $n$ function evaluations, so that the the worst-case error 
\begin{align}\label{eq:wce-approx}
\| I - A_n \|_{\mathscr{L}(W^{\alpha,q}_{\rho}(\R),L^p_{\rho}(\R))} 
= \sup_{0\ne f\in W^{\alpha,q}_{\rho}(\R)} \frac{\|f-A_n f \|_{L^p_{\rho}(\R)}}{\|f\|_{W^{\alpha,q}_{\rho}(\R)}}
\end{align}
is as small as possible for $1 \leq p < q < \infty$.

To begin with, let us expand the $L^p_{\rho}(\R)$-error as follows:
\begin{align*}
\|f-A_n f\|^p_{L^p_{\rho}(\R)}&=\int_\R \left|f(x)-(A_n f)(x)\right|^p\rho(x)\rd x\\
&=
\int_0^{2\pi} \big|f(\phi(\theta)) \big(\rho(\phi(\theta)) \phi'(\theta) \big)^{1/p} -(B_n f)(\theta)\big|^p  \! \rd \theta
,
\end{align*}
where the transformed approximation operator $B_n$ on the unit circle is defined by
\[
(B_n f)(\theta) = (A_n f)(\phi(\theta)) \, \big(\rho(\phi(\theta)) \phi'(\theta) \big)^{1/p}. 
\]
Hence, deviating slightly from the integration problem of Section~\ref{sec:traped}, our aim is to construct an approximation for $g_p \coloneqq ((f \rho^{1/p}) \circ \phi) (\phi')^{1/p}$ on the torus $\T$.
We propose the following algorithm, which is nothing but trigonometric interpolation of $g_p$ using equidistant points on the unit circle:
\begin{definition}[Trigonometric interpolation with M\"obius transformation]
\label{def:trig_interp}
Let $n\in\N$ and $g_p \coloneqq ((f \rho^{1/p}) \circ \phi) (\phi')^{1/p}$. We define the algorithm $B_n$ via trigonometric interpolation of $g_p$:
\begin{align}\label{eq:approx-alg} 
(B_n f)(\theta) \coloneqq \sum_{k=\lfloor -(n-1)/2 \rfloor}^{\lfloor (n-1)/2 \rfloor} \widehat{g_p}(k) \re^{\ri k \theta}, \qquad  \widehat{g_p}(k) \coloneqq \frac{1}{2\pi n}\sum_{j=0}^{n-1} g_p (\theta_j) \re^{-\ri k \theta_j}
,
\end{align}
where $\theta_j = 2\pi j / n$.
The resulting algorithm over the real line is then given by
\[
(A_n f) (x)=(B_n f) (\phi^{-1}(x))\, \big(\rho(x)\phi'(\phi^{-1}(x) \big)^{-1/p},
\]
where
\[
\phi'(\phi^{-1}(x)) = \frac{1}{(\phi^{-1})'(x)} = \frac{1}{2c} \big(x^2 + c^2\big)
\]
by \eqref{eq:invphi}.
\end{definition}
We claim that this algorithm achieves the optimal rate of convergence; this result is presented in two parts as Theorem~\ref{thm:main_result_p} and Proposition~\ref{prop:p_lower_bound}.
\begin{theorem}[Upper bound on $L^p_\rho$ approximation]\label{thm:Lp-approx}
\label{thm:main_result_p}
    For $\rho \in \mathcal{S}_+^{\rm mon}$ and $1\le p <q<\infty$, it holds that
    \begin{equation}
    \label{eq:main_result_p}
        \| I - A_n \|_{\mathscr{L}(W^{\alpha,q}_{\rho}(\R), L^p_{\rho}(\R))} \leq C_{\rho, \alpha,p,q} n^{-\alpha}, \quad \alpha \in \N, 
    \end{equation}
    where $C_{\rho, \alpha,p,q} > 0$ is independent of $n \in \N$.
\end{theorem}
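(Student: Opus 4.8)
The plan is to reduce the $L^p_\rho(\R)$ approximation error for $f$ to the $L^p(\T)$ error of trigonometric interpolation of $g_p = ((f\rho^{1/p})\circ\phi)(\phi')^{1/p}$, exactly as in the integration case but now with the weight $\rho^{1/p}$ playing the role of $\rho$. By Corollary~\ref{cor:decay}, $\rho^{1/p} \in \mathcal{S}_+^{\rm mon}$, so the structural results are available for it. The first step is to establish a mapping property analogous to Lemma~\ref{thm:f-rho-norm}: namely that $f \in W^{\alpha,q}_\rho(\R)$ implies $g_p \in W^{\alpha,q}(\T) \cong W^{\alpha,q}_{\rm per}(0,2\pi)$, with $\|g_p\|_{W^{\alpha,q}(\T)} \le C_{\rho,\alpha,p,q}\|f\|_{W^{\alpha,q}_\rho(\R)}$. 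The derivative structure \eqref{eq:ind1}--\eqref{eq:ind2} still applies with $\rho$ replaced by $\rho^{1/p}$ and the single factor $\phi'$ replaced by $(\phi')^{1/p}$; one verifies by an induction analogous to the appendix results that $(g_p)^{(\tau)}$ is a finite linear combination of terms $((f^{(\tau_1)}(\rho^{1/p})^{(\tau_2)})\circ\phi)\,(\phi')^{1/p}\prod \phi^{(\tau_{3,j})}/\phi'$ with the same index constraints. Since $(\phi')^{1/p}(\theta) = (c/2)^{1/p}\sin^{-2/p}(\theta/2)$ and each $\phi^{(\tau)}/\phi'$ contributes bounded trigonometric factors times a power of $\sin^{-1}(\theta/2)$, estimating $\|(g_p)^{(\tau)}\|_{L^q(0,2\pi)}^q$ by a change of variables to $\R$ reduces, via H\"older with exponents $q/(q-?) $ chosen so the $\rho$-weight comes out with exponent $1$, to bounding $\sup_{x\in\R}$ of a product of a polynomial in $x$ (coming from $(\phi^{-1})'$ and the $\sin^{-1}$ powers, cf.\ \eqref{eq:inv_sin}) against $|(\rho^{1/p})^{(\tau_2)}|^{?}/\rho^{?}$; Lemma~\ref{lemma:decay} kills this because the numerator decays faster than any polynomial while the $\rho$-power in the denominator stays bounded below on compacts and the excess power of $\rho$ is positive. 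The periodicity / boundary-vanishing conditions \eqref{eq:g-boundary} carry over by the same argument used in the proof of Lemma~\ref{thm:f-rho-norm}: split off a factor $f^{(\tau_1)}(\rho^{1/p})^{1/?+\varepsilon}$ which is bounded by a Sobolev embedding on $\R$, and show the remaining factor $((\rho^{1/p})^{(\tau_2)}/(\rho^{1/p})^{\cdots})\circ\phi$ times the blowing-up $\phi^{(\tau_{3,j})}$ still tends to zero at $\theta\to 0^\pm$ because Lemma~\ref{lemma:decay} beats any polynomial rate.

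The second step is to invoke a classical $L^p(\T)$ error bound for trigonometric interpolation on equidistant nodes: for $g_p \in W^{\alpha,q}_{\rm per}(0,2\pi)$ with $1 < q < \infty$, the $n$-point interpolation operator $B_n$ satisfies $\|g_p - B_n g_p\|_{L^q(\T)} \le C\,n^{-\alpha}\|g_p\|_{W^{\alpha,q}(\T)}$; see, e.g., the references already cited in the paper (\cite{T2018_book}, or \cite{AH2009}). Since $p < q$ and $\T$ has finite measure, $\|\cdot\|_{L^p(\T)} \le (2\pi)^{1/p-1/q}\|\cdot\|_{L^q(\T)}$, so the same rate $n^{-\alpha}$ holds in $L^p(\T)$. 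Chaining this with the norm-equivalence identity $\|f - A_n f\|_{L^p_\rho(\R)} = \|g_p - B_n f\|_{L^p(\T)}$ established in the displayed computation preceding Definition~\ref{def:trig_interp}, together with the Step-one bound $\|g_p\|_{W^{\alpha,q}(\T)} \le C\|f\|_{W^{\alpha,q}_\rho(\R)}$, yields \eqref{eq:main_result_p}.

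One bookkeeping point to handle carefully: the interpolation formula \eqref{eq:approx-alg} uses $n$ samples with frequencies indexed by $\lfloor -(n-1)/2\rfloor,\dots,\lfloor(n-1)/2\rfloor$, so $B_n f = B_n g_p$ is precisely the standard trigonometric interpolant of $g_p$, and the classical estimate applies verbatim; no aliasing subtlety beyond the usual one (which is already absorbed into the cited bound). Also, as in the integration remark, $\phi$ and $\phi'$ blow up at $\theta=0,2\pi$, but the node $\theta_0=0$ is handled by the boundary-vanishing $g_p(0)=0$ from Step one, so the scheme is well defined.

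\textbf{Main obstacle.} The crux is Step one, specifically verifying that the H\"older-exponent bookkeeping in the $L^q$ estimate for $(g_p)^{(\tau)}$ actually closes: because $g_p$ carries $\rho^{1/p}$ rather than $\rho$, after raising to the $q$th power one must arrange that the weight $\rho$ appears with exponent exactly $1$ so that $\|f^{(\tau_1)}\|_{L^q_\rho(\R)}$ can be pulled out, while a strictly positive residual power of $\rho$ (times the polynomial factors from $(\phi^{-1})'$ and the $\sin^{-1}$ powers) remains inside a supremum to which Lemma~\ref{lemma:decay} applies — this forces a relation among $p$, $q$, and the number of derivative factors, and one must check it is always satisfiable under $1\le p<q<\infty$ (it is, with room to spare, by choosing the split exponent appropriately and using $p/q<1$). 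The periodicity conditions \eqref{eq:g-boundary} are the second, milder, obstacle, but they succumb to the identical $\rho^{1/q'+\varepsilon}$-splitting trick from the proof of Lemma~\ref{thm:f-rho-norm}.
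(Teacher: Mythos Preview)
Your proposal is correct and follows essentially the same route as the paper: prove the analogue of Lemma~\ref{thm:f-rho-norm} for $g_p$ (the paper states and proves this as Lemma~\ref{lemma:f-rho-Lq}, using precisely the derivative structure \eqref{eq:ind1p} you sketched, the splitting trick with $\rho^{1/q+\varepsilon}$ for the boundary conditions, and Lemma~\ref{lemma:decay} applied to $\omega=\rho^{1/p}$ with $r=q/p>1$ to close the exponent bookkeeping), and then invoke a trigonometric interpolation bound from \cite{T2018_book}. The only cosmetic difference is that the paper cites Temlyakov's \cite[Theorem~2.7]{T2018_book} directly for the $W^{\alpha,q}(\T)\to L^p(\T)$ interpolation error, whereas you go via $W^{\alpha,q}(\T)\to L^q(\T)$ followed by the finite-measure embedding $L^q(\T)\hookrightarrow L^p(\T)$; both yield the same rate.
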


To deduce the convergence rate of Theorem~\ref{thm:Lp-approx}, we generalize Lemma~\ref{thm:f-rho-norm} to show that the transformed target function $g_p$ belongs to the periodic $L^q$-based Sobolev space $W^{\alpha,q}_{\rm per}(0, 2 \pi) \cong W^{\alpha,q}(\T)$ for $q > p$. The proof of Theorem~\ref{thm:Lp-approx} then follows from a trigonometric interpolation result by Temlyakov \cite[Theorem~2.7]{T2018_book}. 

\begin{lemma}\label{lemma:f-rho-Lq}
        Let $\alpha\in\N$, $1\le p<q<\infty$, $f\in W^{\alpha,q}_{\rho}(\R)$, $\rho \in \mathcal{S}_+^{\rm mon}$ and $g_p = ((f\,\rho^{1/p})\circ \phi) (\phi')^{1/p}$.
        For $\tau=0,\ldots,\alpha$, 
    \begin{equation}
    \label{eq:g_p-tau}
    \big\|g_p^{(\tau)} \big\|_{L^q(0, 2 \pi)} \leq C_{\rho, \tau, p, q} \|f\|_{W^{\alpha,q}_{\rho}(\R)},
    \end{equation}
    where the constant $C_{\rho, \tau, q, p} > 0$ is independent of $f$.
    Moreover, for $\tau=0,\ldots,\alpha-1$,
    \begin{equation}\label{eq:g_p-boundary}
    \lim_{\theta\to0^+} g_p^{(\tau)}(\theta) =
    \lim_{\theta\to 2\pi^-} g_p^{(\tau)}(\theta) = 0.
    \end{equation}   
    In consequence, $g_p\in W^{\alpha,q }_{\rm per}(0,2\pi)$.
\end{lemma}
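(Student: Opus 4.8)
The plan is to mirror the structure of the proof of Lemma~\ref{thm:f-rho-norm}, replacing $f\rho$ by $f\,\rho^{1/p}$ and $\phi'$ by $(\phi')^{1/p}$, and working in $L^q$ rather than $L^2$. First I would establish the two structural facts by induction (deferred to Appendix~\ref{ap:induction}): the weak derivative $g_p^{(\tau)}$ on $(0,2\pi)$ is a finite linear combination of terms of the form
\[
\big( (f^{(\tau_1)} (\rho^{1/p})^{(\tau_2)}) \circ \phi \big)\,\big((\phi')^{1/p}\big)^{(\tau_{3,0})} \prod_{j=1}^{\tau_1+\tau_2} \phi^{(\tau_{3,j})},
\]
with $\tau_1+\tau_2\le\tau$ and $\tau_{3,0}+\sum_{j=1}^{\tau_1+\tau_2}\tau_{3,j} = \tau$; and, since $(\phi')^{1/p} = (c/2)^{1/p}\sin^{-2/p}(\theta/2)$, any derivative $\big((\phi')^{1/p}\big)^{(\sigma)}$ equals $\widetilde\psi_\sigma(\theta)\,\sin^{-2/p-\sigma}(\theta/2)$ for some bounded smooth trigonometric combination $\widetilde\psi_\sigma$, just as in \eqref{eq:ind2}. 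Combining these with \eqref{eq:ind2}, the triangle inequality reduces \eqref{eq:g_p-tau} to bounding, for each admissible triple, a single model term
\[
\widetilde g_{p,\tau}(\theta) = \frac{(f^{(\tau_1)} (\rho^{1/p})^{(\tau_2)})\circ\phi(\theta)}{\sin^{\eta}(\theta/2)}, \qquad \eta = 2/p + \tau + \tau_1 + \tau_2 \le 2/p + 2\tau.
\]

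Next I would change variables $x = \phi(\theta)$, exactly as in Lemma~\ref{thm:f-rho-norm}, to write
\[
\int_0^{2\pi} |\widetilde g_{p,\tau}(\theta)|^q \rd\theta = \int_\R \Big| f^{(\tau_1)}(x)\,(\rho^{1/p})^{(\tau_2)}(x)\, \frac{1}{\sin^\eta(\phi^{-1}(x)/2)} \Big|^q (\phi^{-1})'(x)\rd x,
\]
then factor out $\rho(x) = \rho(x)^{1}$ against $|f^{(\tau_1)}(x)|^q$ to leave $\|f^{(\tau_1)}\|_{L^q_\rho(\R)}^q \le \|f\|_{W^{\alpha,q}_\rho(\R)}^q$ times the supremum over $x\in\R$ of
\[
\frac{(\phi^{-1})'(x)}{\sin^{q\eta}(\phi^{-1}(x)/2)}\cdot \frac{\big|(\rho^{1/p})^{(\tau_2)}(x)\big|^q}{\rho(x)}.
\]
The first factor is polynomial in $x$ by the same computation as \eqref{eq:inv_sin} (with the new, possibly non-integer exponent $\eta$; one only needs $(\phi^{-1})' = 2c/(c^2+x^2)$ and $\sin^2(\mathrm{arccot}(-x/c)) = c^2/(c^2+x^2)$, so the ratio is $\tfrac{2}{c^{q\eta+1}}(c^2+x^2)^{q\eta/2 - 1}$, a polynomial of degree $q\eta - 2$). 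For the second factor, I invoke Corollary~\ref{cor:decay} — which tells us $\rho^{1/p}\in\smon$ — and then Lemma~\ref{lemma:decay} with the weight $\rho^{1/p}$, the derivative order $\tau_2$, and the exponent $r = q/p > 1$, which gives that $|(\rho^{1/p})^{(\tau_2)}|^{q/p}/\rho^{1/p}$ is rapidly decreasing; multiplying by $\rho^{(p-1)/p}/\rho^{0} = \rho^{1-1/p}$, which is bounded, yields that $|(\rho^{1/p})^{(\tau_2)}|^{q}/\rho = \big(|(\rho^{1/p})^{(\tau_2)}|^{q/p}/\rho^{1/p}\big)\cdot\rho^{1-1/p}$ is rapidly decreasing and hence, multiplied by the polynomial first factor, still tends to $0$ at infinity and is bounded. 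This proves \eqref{eq:g_p-tau}, and so $g_p\in W^{\alpha,q}(0,2\pi)$.

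For the boundary conditions \eqref{eq:g_p-boundary}, I follow the decomposition \eqref{eq:g-decomp}: split each term of the form above as a bounded factor $\big(f^{(\tau_1)}\rho^{1/q}\big)\circ\phi$ times a vanishing factor. Here the exponent should be chosen so that the ``good'' factor lies in $W^{1,q}(\R)\hookrightarrow L^\infty(\R)$; concretely, set $h_{\tau_1} \coloneqq f^{(\tau_1)}\rho^{1/q}$ for $\tau_1 = 0,\dots,\alpha-1$, and check $\|h_{\tau_1}\|_{L^q(\R)}\le\|f^{(\tau_1)}\|_{L^q_\rho(\R)}<\infty$ and, after the product rule $h_{\tau_1}' = f^{(\tau_1+1)}\rho^{1/q} + \tfrac1q f^{(\tau_1)}\rho^{1/q-1}\rho'$, that $\|h_{\tau_1}'\|_{L^q(\R)}<\infty$ using $\|f^{(\tau_1+1)}\|_{L^q_\rho}<\infty$ and Lemma~\ref{lemma:decay} (with $r>1$ chosen so that $|\rho'|^q/\rho^{q - 1}$-type quantities are controlled — since $\rho^{1-1/q}$ is bounded, it suffices that $|\rho'|^q/\rho$ be rapidly decreasing, i.e., take $\beta=1$, $r=q>1$ in Lemma~\ref{lemma:decay}). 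The remaining factor then carries $|(\rho^{1/p})^{(\tau_2)}| / \rho^{1/q}$ times the trigonometric/$\phi$-derivative part; since $\rho^{1/p}\in\smon$ and $1/q < 1/p$, Lemma~\ref{lemma:decay} applied to $\rho^{1/p}$ gives $|(\rho^{1/p})^{(\tau_2)}|^{r}/\rho^{1/p}$ rapidly decreasing for any $r>1$, so in particular $x^\gamma |(\rho^{1/p})^{(\tau_2)}(x)|/\rho(x)^{1/q}\to0$ for every $\gamma$ (write it as $\big(|(\rho^{1/p})^{(\tau_2)}|/\rho^{1/(pr)}\big)\cdot\rho^{1/(pr) - 1/q}$ with $r$ large enough that $1/(pr)\le 1/q$, the second factor bounded); combined with \eqref{eq:ind2} and \eqref{eq:inv_sin} this forces the product to vanish as $\theta\to0^\pm$. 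Then \eqref{eq:g_p-boundary} follows, and membership $g_p\in W^{\alpha,q}_{\rm per}(0,2\pi)$ follows from the definition \eqref{eq:Lq-per-sob2}. The main obstacle, and the only place requiring real care, is bookkeeping the non-integer exponents: one must consistently pick the splitting exponent ($1/q$ for the $L^\infty$ factor, or $1/2+\varepsilon$ as in the $q=2$ case more generally) and the Lemma~\ref{lemma:decay} parameter $r$ so that $q/p > 1$ is genuinely used to absorb the $q$th power of the derivative of $\rho^{1/p}$, and so that leftover powers of $\rho$ have nonnegative exponent and are therefore bounded; everything else is a verbatim adaptation of Lemma~\ref{thm:f-rho-norm}.
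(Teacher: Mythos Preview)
Your overall strategy is the paper's: the same structural decomposition of $g_p^{(\tau)}$ (your bookkeeping with $((\phi')^{1/p})^{(\tau_{3,0})}$ is just a reorganization of the paper's $(\phi')^{1/p-\tau_3}\prod\phi^{(\tau_{4,j})}$ and leads to the same model term), the same change of variables, and the same appeal to Lemma~\ref{lemma:decay} with $\omega=\rho^{1/p}$ and $r=q/p>1$. The $L^q$-bound~\eqref{eq:g_p-tau} goes through, apart from one algebraic slip: your identity $|(\rho^{1/p})^{(\tau_2)}|^q/\rho=\big(|(\rho^{1/p})^{(\tau_2)}|^{q/p}/\rho^{1/p}\big)\cdot\rho^{1-1/p}$ is false (the right-hand side is $|(\rho^{1/p})^{(\tau_2)}|^{q/p}\rho^{1-2/p}$). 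The correct step is simply $|(\rho^{1/p})^{(\tau_2)}|^q/\rho=\big(|(\rho^{1/p})^{(\tau_2)}|^{q/p}/\rho^{1/p}\big)^p$, and the $p$th power of a rapidly decreasing function is rapidly decreasing.

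The boundary-condition argument, however, has a genuine gap: dropping the $\varepsilon$ and taking $h_{\tau_1}=f^{(\tau_1)}\rho^{1/q}$ does \emph{not} in general give $h_{\tau_1}'\in L^q(\R)$. After the product rule, the offending term $f^{(\tau_1)}\rho^{1/q-1}\rho'$ has $L^q$-norm bounded by $\|f^{(\tau_1)}\|_{L^q_\rho}\cdot\sup_x|\rho'(x)/\rho(x)|$, and already for the Gaussian weight $\rho'/\rho=-x$ is unbounded. Lemma~\ref{lemma:decay} controls $|\rho'|^r/\rho$ only for $r>1$, which never yields a bound on $|\rho'/\rho|$. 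The paper fixes this exactly as you hint at the end but do not carry out: set $h_{q,\tau_1}=f^{(\tau_1)}\rho^{1/q+\varepsilon}$ with $\varepsilon\in(0,1/p-1/q)$, so that the needed supremum becomes $\sup_x|\rho'|^q/\rho^{q(1-\varepsilon)}<\infty$ (Lemma~\ref{lemma:decay} with $r=1/(1-\varepsilon)>1$, then raise to the power $q(1-\varepsilon)$), while the constraint $1/q+\varepsilon<1/p$ still leaves room for $(\rho^{1/p})^{(\tau_2)}/\rho^{1/q+\varepsilon}$ to be rapidly decreasing. In that last step you also have the inequality backwards: you need $r\in(1,q/p]$ so that $1/(pr)\ge 1/q$ and $\rho^{1/(pr)-1/q}$ is bounded; choosing $r$ ``large enough that $1/(pr)\le 1/q$'' makes the exponent negative and the factor blow up.
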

\begin{proof}
The proof follows the general structure of that for Lemma~\ref{thm:f-rho-norm}. To begin with, note that $\rho^{1/p} \in \smon$ due to Corollary~\ref{cor:decay}. 

A straightforward induction argument, presented in Appendix~\ref{ap:induction}, demonstrates that the weak derivative $g_p^{(\tau)}$, $\tau \in \N$, on the open interval $(0, 2 \pi)$ is a finite linear combination of terms of the form
\begin{equation}
\label{eq:ind1p}
    \big((f^{(\tau_1)} (\rho^{1/p})^{(\tau_2)}) \circ \phi \big) (\phi')^{1/p - \tau_3} \prod_{j=1}^{\tau_1+\tau_2 + \tau_3} \phi^{(\tau_{4,j})},
\end{equation}
    where the nonnegative integers $\tau_1$, $\tau_2$, $\tau_3$ and $\tau_{4,j}$ satisfy 
    \[
    \tau_1 + \tau_2 + \tau_3 \leq \tau \quad \text{and} \quad \sum_{j=1}^{\tau_1+\tau_2 + \tau_3} \tau_{4,j} = \tau + \tau_3.
    \]
    Recall that the structure of $\phi^{(\tau)}$ is as indicated in \eqref{eq:ind2}; see also \eqref{eq:invphi}.

Let us first tackle \eqref{eq:g_p-tau}. Because of \eqref{eq:ind1p}, \eqref{eq:ind2}, \eqref{eq:invphi} and the triangle inequality, it is sufficient to prove \eqref{eq:g_p-tau} with 
\[
\widetilde{g}_{\tau, p}(\theta) 
\coloneqq
\frac{\big(f^{(\tau_1)} (\rho^{1/p})^{(\tau_2)}\big) \circ \phi (\theta) }{\sin^{\eta+2/p}(\theta/2)}, \qquad \tau_1 + \tau_2 \leq \tau \leq \alpha, \ \ \eta = \tau + \tau_1 + \tau_2 \leq 2\tau,
\]
replacing $g_p^{(\tau)}$.
We have
\begin{align*}
    \big\|\widetilde{g}_{\tau, p} \big\|_{L^q(0,2 \pi)}^q &= 
    \int_0 ^{2\pi} \bigg|f^{(\tau_1)}(\phi(\theta)) \, (\rho^{1/p})^{(\tau_2)}(\phi(\theta)) \, \frac{1}{\sin^{\eta+2/p}(\theta/2)}\bigg|^q \rd \theta \\[1mm]
    & =
  \int_{\R} \bigg|f^{(\tau_1)}(x) \, (\rho^{1/p})^{(\tau_2)}(x) \, \frac{1}{\sin^{\eta + 2/p }(\phi^{-1}(x)/2)} \bigg|^q \big|(\phi^{-1} ) ' (x)\big| \rd x
    \\[1mm]
    &\le 
    \int_{\R} \big|f^{(\tau_1)}(x) \big|^q \rho(x) \rd x \\ 
    & \qquad \qquad  \times \ \ \sup_{x\in\R}   \frac{\big|(\rho^{1/p})^{(\tau_2)})(x)\big|^q}{(\rho^{1/p})(x)^p} \frac{|(\phi^{-1} )' (x)|}{\big| \sin^{q \eta + 2 q/p}(\phi^{-1}(x)/2)\big|}  
    \\[2mm]
    &\le C_{p,q,\tau_2} \|f\|^q _{W^{\alpha,q}_{\rho}(\R)}.
\end{align*}
The last inequality follows from Lemma~\ref{lemma:decay} with $\omega=\rho^{1/p}$ and $r=q/p>1$ (cf.~Corollary~\ref{cor:decay}) since
\begin{align}
\label{eq:inv_sinp}
\frac{(\phi^{-1} )' (x)}{ \sin^{q \eta + 2 q/p}(\phi^{-1}(x)/2))} &= \frac{2 c}{c^2 + x^2} \frac{1}{\sin^{q \eta + 2 q/p}({\rm arccot}(-x/c))} \\[1mm] 
&= \frac{2}{c^{q \eta + 2 q/p -1}} \big(c^2 + x^2 \big)^{q \eta/2 + q/p -1}
\end{align}
by elementary trigonometry and \eqref{eq:invphi}.

To prove \eqref{eq:g_p-tau}, we start by bounding a term of the form \eqref{eq:ind1p} for any $\varepsilon\in(0,(q-p)/pq)$ as follows:
    \begin{align}
    \label{eq:p_convergence}
& \qquad \Bigg|\big((f^{(\tau_1)} (\rho^{1/p})^{(\tau_2)}) \circ \phi (\theta) \big) \phi'(\theta)^{1/p - \tau_3} \prod_{j=1}^{\tau_1+\tau_2+\tau_3} \phi^{(\tau_{4,j})} (\theta)\Bigg|
\\[1mm]
&\le 
 \Big|\big(f^{(\tau_1)} \rho^{1/q+\varepsilon}\big) \circ \phi (\theta)  \Big| \; \Bigg|  \bigg( \frac{(\rho^{1/p})^{(\tau_2)}}{\rho^{1/q+\varepsilon}} \circ \phi(\theta) \bigg) \phi'(\theta)^{1/p - \tau_3} \prod_{j=1}^{\tau_1 + \tau_2+\tau_3}\phi^{(\tau_{4,j})} (\theta)\Bigg|.
\end{align}
Since $1/q + \varepsilon <1/p$, it is a consequence of Lemma~\ref{lemma:decay}, Corollary~\ref{cor:decay}, \eqref{eq:invphi} and \eqref{eq:ind2} that 
\begin{equation}
\label{eq:p-limits}
\lim_{\theta\to\pm0}\Bigg|  \bigg( \frac{(\rho^{1/p})^{(\tau_2)}}{\rho^{1/q+\varepsilon}} \circ \phi(\theta) \bigg) \phi'(\theta)^{1/p - \tau_3} \prod_{j=1}^{\tau_1 + \tau_2+\tau_3}\phi^{(\tau_{4,j})} (\theta) \Bigg|=0;
\end{equation}
see also \eqref{eq:inv_sinp}.

Consider then the first term on the right-hand side of \eqref{eq:p_convergence}, or more precisely, the continuous function $h_{q,\tau_1}\coloneqq f^{(\tau_1)} \rho^{1/q+\varepsilon}$, $\tau_1 = 0, \dots, \tau \leq \alpha-1$, with the weak derivative 
\begin{equation}
\label{eq:hp_deriv}
h'_{q,\tau_1}=f^{(\tau_1+1)} \rho^{1/q+\varepsilon} +(1/q+\varepsilon) f^{(\tau_1)} \rho^{1/q+\varepsilon-1} \rho'.
\end{equation}
By showing that both $h_{q,\tau_1}$ and $h'_{q,\tau_1}$ belong to $L^q(\R)$, the (essential) boundedness of $h_{q,\tau_1}$ follows from the Sobolev embedding $W^{1,q}(\R) \hookrightarrow L^\infty(\R)$. 
We have
\[
\|h_{q,\tau_1}\|^q_{L^q(\R)}
=
\int_{\R} \big|f^{(\tau_1)}(x)\big|^q \rho^{1+q\varepsilon}(x)\rd x 
\le
\big\|f^{(\tau_1)} \big\|^q_{L^q_\rho(\R)} \sup_{x \in \R}\rho^{q\varepsilon}(x)
<
\infty.
\]
Moreover, the two terms composing $h'_{p,\tau_1}$ in \eqref{eq:hp_deriv} satisfy
\[
\big\|f^{(\tau_1+1)} \rho^{1/q+\varepsilon} \big\|^q_{L^q(\R)} 
\le
\big\|f^{(\tau_1+1)} \big\|^q_{L^q_\rho(\R)} \sup_{x \in \R}\rho^{q\varepsilon}(x)
<
\infty,
\]
and
\begin{align*}
    \big \|f^{(\tau_1)} \rho^{1/q+\varepsilon-1} \rho' \big\|^q_{L^q(\R)} 
    &=
    \int_\R \big|f^{(\tau_1)} (x)\big|^q \rho(x)\,
    \frac{ |\rho'(x)|^q }{\rho(x)^{q-q\varepsilon}} \rd x
    \\[1mm]
    &\le
    \big\|f^{(\tau_1)} \big\|^q_{L^q_\rho(\R)} \,
    \sup_{x \in \R}  \frac{ |\rho'(x)|^q }{\rho(x)^{q-q\varepsilon}}
<
\infty,
\end{align*}
where the last inequality is a consequence of Lemma~\ref{lemma:decay}. Combining these estimates with \eqref{eq:p-limits} proves \eqref{eq:g_p-boundary}.

The final conclusion that $g_p \in W^{\alpha,q}_{\rm per}(0,2\pi)$ follows from the definition~\eqref{eq:Lq-per-sob2}. 
\end{proof}

In order to prove the claimed optimality of the M\"obius-transformed trigonometric interpolation, we also give a lower bound for all linear approximation algorithms.

\begin{proposition}[General lower bound]
\label{prop:p_lower_bound}
Let $\rho \in \smon$, $1 \leq p < q < \infty$, and $\alpha \in \N$. Then, for any linear operator $A_n: W^{\alpha,q}_{\rho}(\R) \to L^p_{\rho}(\R)$ with ${\rm rank}(A_n) \leq n$,
\begin{equation}
 \| I - A_n \|_{\mathscr{L}(W^{\alpha,q}_{\rho}(\R), L^p_{\rho}(\R))} \geq C \frac{1}{n^\alpha},
\end{equation}
where the constant $C$ is independent of $n$ and $A_n$.
\end{proposition}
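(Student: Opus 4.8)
The plan is to follow the blueprint of the integration lower bound in the preceding proposition: I would restrict attention to functions supported in a fixed bounded interval, on which $\rho$ is bounded above and below by positive constants, thereby reducing the weighted approximation problem on $\R$ to the classical unweighted one on $(0,1)$, and then invoke the known lower bound on the $n$-widths of Sobolev embeddings.

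Concretely, I would regard $\mathring{W}^{\alpha,q}(0,1)$ as a subspace of $W^{\alpha,q}_{\rho}(\R)$ via zero continuation onto $\R\setminus(0,1)$ \cite[Lemma~3.22]{Adams1975}. Since every $f\in\mathring{W}^{\alpha,q}(0,1)$ and its weak derivatives up to order $\alpha$ are supported in $[0,1]$, where $\rho$ is continuous and positive, one has $\|f\|_{W^{\alpha,q}_{\rho}(\R)}\le(\max_{[0,1]}\rho)^{1/q}\|f\|_{W^{\alpha,q}(0,1)}$, while discarding the integral over $\R\setminus(0,1)$ gives $\|f-A_nf\|_{L^p_{\rho}(\R)}\ge(\min_{[0,1]}\rho)^{1/p}\|f-A_nf\|_{L^p(0,1)}$. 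Together these yield
\[
\|I-A_n\|_{\mathscr{L}(W^{\alpha,q}_{\rho}(\R),L^p_{\rho}(\R))} \ge \frac{(\min_{[0,1]}\rho)^{1/p}}{(\max_{[0,1]}\rho)^{1/q}}\; \sup_{0\ne f\in\mathring{W}^{\alpha,q}(0,1)} \frac{\|f-A_nf\|_{L^p(0,1)}}{\|f\|_{W^{\alpha,q}(0,1)}},
\]
so it remains to bound the last supremum from below by $C n^{-\alpha}$.

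For that, I would compose $A_n$ with the restriction $R\colon L^p_{\rho}(\R)\to L^p(0,1)$ to obtain $\widetilde{A}_n\coloneqq R\circ A_n|_{\mathring{W}^{\alpha,q}(0,1)}$, which is linear with $\operatorname{rank}(\widetilde{A}_n)\le\operatorname{rank}(A_n)\le n$; its range $V$ is therefore a subspace of $L^p(0,1)$ of dimension at most $n$. Since $\|f-A_nf\|_{L^p(0,1)}=\|f-\widetilde{A}_nf\|_{L^p(0,1)}\ge\operatorname{dist}_{L^p(0,1)}(f,V)$, taking the supremum over the unit ball of $\mathring{W}^{\alpha,q}(0,1)$ and then the infimum over all subspaces of dimension at most $n$ shows that the supremum in the previous display is at least the Kolmogorov $n$-width $d_n(\mathring{W}^{\alpha,q}(0,1),L^p(0,1))$. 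In the regime $1\le p<q<\infty$ this width --- equivalently, the linear $n$-width of the same embedding --- is classically of order $n^{-\alpha}$, hence $\ge C n^{-\alpha}$; I would cite this, e.g., from \cite{T1990, T2018_book}. Chaining the two displays then completes the argument.

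The one genuinely non-elementary ingredient is this width bound, and it is the step I expect to be the main obstacle, in that it cannot be replaced by the naive bump construction: if $V_N$ is the span of $N\asymp n$ mutually disjoint, $N$-fold rescaled copies of a fixed bump on $(0,1)$, then $\|\cdot\|_{L^p(0,1)}/\|\cdot\|_{W^{\alpha,q}(0,1)}$ on $V_N$ is minimized by a single bump and is only of order $n^{-\alpha-(1/p-1/q)}$, strictly smaller than $n^{-\alpha}$ when $p<q$. Recovering the sharp exponent $n^{-\alpha}$ reduces, after a standard discretization, to the finite-dimensional estimate $d_n(B_q^{2n},\ell_p^{2n})\asymp n^{1/p-1/q}$ of Kashin--Gluskin type, which is the substantive fact I would take from the literature rather than reprove.
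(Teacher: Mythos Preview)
Your proof is correct but takes a genuinely different route from the paper's. The paper does \emph{not} reuse the compactly-supported-function trick from the integration lower bound; instead it embeds the \emph{periodic} space $W^{\alpha,q}_{\rm per}(0,1)$ into $W^{\alpha,q}_{\rho}(\R)$ by $1$-periodic extension, bounds the weighted norm via $\sum_{k\in\Z}\sup_{[0,1]}\rho(\cdot+k)<\infty$ (here the rapid decrease of $\rho\in\smon$ is actually used), restricts the $L^p_\rho$-error to $[0,1]$, and then invokes the classical periodic lower bound \cite[Theorem~2.1.1]{T2018_book}. Your approach, via zero extension of $\mathring{W}^{\alpha,q}(0,1)$, is more elementary in its assumption on $\rho$ --- only local positivity and continuity on $[0,1]$ are needed, not the Schwartz property --- and it mirrors the paper's own argument for the integration lower bound. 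The price is that you must cite the Kolmogorov width estimate $d_n(\mathring{W}^{\alpha,q}(0,1),L^p(0,1))\asymp n^{-\alpha}$ for $p<q$, which (as you correctly note) genuinely requires the Kashin--Gluskin machinery rather than a bump count; the paper sidesteps this by working with the periodic class, for which the cited result is somewhat more directly available. Both routes are valid, and yours arguably uses less about $\rho$.
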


\begin{proof}
    We follow the line of reasoning in \cite[Proof of Theorem~3.3]{DK2023}. To this end, let $f \in W^{\alpha,q}_{\rm per}(0,1)$ and denote its $1$-periodic extension onto the real line by the same symbol. 
    An argument similar to that in the proof of Proposition~\ref{prop:periodicity} demonstrates that the extension $f$ belongs to $W^{\alpha,q}_{\rm loc}(\R)$. For any $N \in \N$,
    \begin{align}
    \label{eq:est_per}
        \| f \|_{W^{\alpha,q}_{\rho}(\R)}^q &= \sum_{\tau = 0}^\alpha \int_{\R} \big| f^{(\tau)} (x) |^q \rho(x) \rd x \nonumber \\
        &= \sum_{\tau = 0}^\alpha \sum_{k \in \Z} \int_0^{1} \big| f^{(\tau)} (x+k) |^q \rho(x+k) \rd x \nonumber \\
        &\leq  \sum_{\tau = 0}^\alpha \int_0^{1} \big| f^{(\tau)} (x) |^q  \rd x \, \sum_{k \in \Z} \sup_{x \in [0,1]} \rho(x + k) \nonumber \\
        & \leq C_{\rho, N} \|  f \|_{W^{\alpha,q}(0,1)}^q \sum_{k \in \Z}
        \sup_{x \in [0,1]} \frac{1}{(1 + (x + k)^{2})^N} \nonumber \\
        & \leq C_{\rho} \|  f \|_{W^{\alpha,q}(0,1)}^q
    \end{align}
    since $\rho$ is rapidly decreasing. In particular, $f \in W^{\alpha,q}_{\rho}(\R)$.

    Since the above construction applies to any $f \in W^{\alpha,q}_{\rm per}(0,1)$ with the same constant $C_{\rho}$, we have 
\begin{align}
& \| I - A_n \|_{\mathscr{L}(W^{\alpha,q}_{\rho}(\R), L^p_{\rho}(\R))} = \sup_{0 \not= f \in W^{\alpha,q}_{\rho}(\R)} \frac{\|(I - A_n) f \|_{L^{p}_{\rho}(\R)}}{\| f \|_{W^{\alpha, q}_{\rho}(\R)}} \\[1mm]
& \qquad \qquad \qquad \geq \frac{\min_{x \in [0,1]} \rho(x)^{1/p}}{C_\rho^{1/q}} \sup_{0 \not= f \in W^{\alpha}_{\rm per}(0,1)} \frac{\|(I - A_n) f \|_{L^{p}(0,1)}}{\| f \|_{W^{\alpha, q}(0, 1)}}.
       \end{align}  
       The claim then follows from a lower bound for the approximation of periodic functions \cite[Theorem~2.1.1]{T2018_book}, presented originally in \cite{M1972}.
\end{proof}

\begin{remark}[Fast Fourier Transform]
    If one resorts to the Fast Fourier Transform (FFT) in \eqref{eq:approx-alg}, the computational cost and memory usage by the algorithm of Definition~\ref{def:trig_interp} are only $\calO(n\log n)$ and $\calO(n)$, respectively. The article \cite{SK2024} considers another interpolation algorithm based on FFT, with the same computational cost. However, our algorithm achieves a better error decay than the one in \cite{SK2024}.
\end{remark}

\section{Multivariate extension by componentwise transforms}\label{sec:multidim}
This section extends the one-dimensional integration result in Section~\ref{sec:traped} to a multidimensional setting.
We consider a componentwise M\"obius transformation $\phi_{\bsc}:(0,2\pi)^d \to \R^d, $
\[
\phi_{\bsc}(\bstheta)
\coloneqq
\big(c_1 \cot(\theta_1/2),c_2 \cot(\theta_2/2),\ldots, c_d \cot(\theta_d/2) \big),
\]
and aim to approximate the multi-dimensional weighted integral
\begin{align}\label{eq:quadrature-multi}
    I_{\rho}(f) \coloneqq \int_{\R^d} f(\bsx)\rho(\bsx)\rd \bsx
    =
    \int_{(0, 2\pi)^d}f(\phi_{\bsc}(\bstheta))\rho(\phi_{\bsc}(\bstheta)) \, \prod_{k=1}^d c_k\phi'(\theta_k)  \rd \bstheta.
\end{align}
Assuming that the target integrand function $f$ lives in a tensor product of one-dimensional weighted Sobolev spaces 
$W^{\alpha,2}_{\rho,\otimes}(\R^d)$ defined by \eqref{eq:tensor_product}, the transformed integrand in \eqref{eq:quadrature-multi} is in the periodic Sobolev space of the same smoothness $W^{\alpha,2}_{\otimes}(\T^d)$. This is a direct consequence of Lemma~\ref{thm:f-rho-norm}.

\begin{proposition}
    Let $\alpha \in \N$, $\rho(\bsx)=\prod_{k=1}^d \rho_k(x_k)$ with $\rho_k\in\smon$, $f\in W^{\alpha,2}_{\rho,\otimes }(\R^d)$, and 
    \[
    g(\bstheta)\coloneqq f(\phi_{\bsc}(\bstheta))\rho(\phi_{\bsc}(\bstheta)) \prod_{k=1}^d c_k\phi'(\theta_k) .
    \] 
    Then $g\in W^{\alpha,2}_{\otimes} (\T^d)$.
\end{proposition}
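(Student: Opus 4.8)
The plan is to reduce the multivariate statement to the one-dimensional Lemma~\ref{thm:f-rho-norm} by exploiting the tensor-product structure of both the source space $W^{\alpha,2}_{\rho,\otimes}(\R^d)$ and the target space $W^{\alpha,2}_\otimes(\T^d)$. Recall that $W^{\alpha,2}_\otimes(\T^d) = \bigotimes_{k=1}^d W^{\alpha,2}(\T)$ and that, by Proposition~\ref{prop:periodicity}, this may be identified with $\bigotimes_{k=1}^d W^{\alpha,2}_{\rm per}(0,2\pi)$. The key observation is that the map $T_k$ sending $h(x_k) \mapsto \big((h\rho_k)\circ\phi_{c_k}\big)\phi_{c_k}'$ is, by Lemma~\ref{thm:f-rho-norm}, a bounded linear operator from $W^{\alpha,2}_{\rho_k}(\R)$ into $W^{\alpha,2}_{\rm per}(0,2\pi)$; moreover $g$ is precisely the image of $f$ under the tensor product $T_1 \otimes \cdots \otimes T_d$ applied componentwise.

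First I would make precise the identification of $W^{\alpha,2}_{\rho,\otimes}(\R^d)$ with a Hilbert tensor product: for $q=2$ the weighted Sobolev norm on $\R$ is induced by an inner product, so $W^{\alpha,2}_{\rho,\otimes}(\R^d)$ is the Hilbert space completion of the algebraic tensor product of the $W^{\alpha,2}_{\rho_k}(\R)$, and similarly $W^{\alpha,2}_\otimes(\T^d)$ on the torus side. Second, I would verify that each $T_k \colon W^{\alpha,2}_{\rho_k}(\R) \to W^{\alpha,2}_{\rm per}(0,2\pi)$ is bounded — this is exactly the content of Lemma~\ref{thm:f-rho-norm}, combining the norm estimate \eqref{eq:g-tau} with the boundary conditions \eqref{eq:g-boundary} that place the image in $W^{\alpha,2}_{\rm per}(0,2\pi)$. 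Third, I would invoke the standard fact that a (finite) tensor product of bounded linear operators between Hilbert spaces extends to a bounded linear operator between the Hilbert tensor products, with norm at most the product of the individual norms. Applying this to $T_1 \otimes \cdots \otimes T_d$ and checking on elementary tensors $f = f_1 \otimes \cdots \otimes f_d$ that its image agrees with the formula defining $g$ — here one uses that $\phi_{\bsc}$ acts componentwise, so $f(\phi_{\bsc}(\bstheta))\rho(\phi_{\bsc}(\bstheta))\prod_k c_k\phi'(\theta_k) = \prod_k (T_k f_k)(\theta_k)$ — and then extending by continuity and linearity to all of $W^{\alpha,2}_{\rho,\otimes}(\R^d)$, yields $g \in W^{\alpha,2}_\otimes(\T^d)$.

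The main obstacle, and the step requiring the most care, is the tensor-product bookkeeping: one must ensure that the weak-derivative structure of $g$ on the open box $(0,2\pi)^d$ is correctly controlled in each coordinate direction simultaneously, i.e.\ that the mixed weak derivatives $g^{(\bstau)}$ with $|\bstau|_\infty \le \alpha$ all lie in $L^2((0,2\pi)^d)$ and that the periodicity (trace) conditions hold on every face. For elementary tensors this factorizes cleanly and reduces coordinatewise to Lemma~\ref{thm:f-rho-norm}; the only genuine work is the density/continuity argument showing the estimate persists for general $f$, together with a routine verification (analogous to the argument behind Proposition~\ref{prop:periodicity}) that membership in the algebraic tensor product with the correct mixed-derivative bounds, plus the boundary conditions, indeed characterizes $W^{\alpha,2}_\otimes(\T^d)$ via the identification with $W^{\alpha,2}_{\rm mix}(\T^d)$ of \eqref{eq:periodic-sobolev-space-d}.
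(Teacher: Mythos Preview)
Your proposal is correct and is precisely the approach the paper intends: the paper gives no proof beyond the sentence ``This is a direct consequence of Lemma~\ref{thm:f-rho-norm},'' and your tensor-product argument---defining the coordinatewise operators $T_k$, invoking Lemma~\ref{thm:f-rho-norm} for their boundedness, and extending $T_1\otimes\cdots\otimes T_d$ from elementary tensors to all of $W^{\alpha,2}_{\rho,\otimes}(\R^d)$ by continuity---is exactly how one makes that sentence rigorous.
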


As the modified integrand function in \eqref{eq:quadrature-multi} is in the periodic Sobolev space $W^{\alpha,2}_{\otimes}(\T^d)$ that is norm-equivalent to the Sobolev space of dominating mixed smoothness $W^{\alpha,2}_{\rm mix}(\T^d)$, one can employ in \eqref{eq:quadrature-multi} \emph{good rank-$1$ lattice points}~\cite[Equation~(5.27)]{N1992book}, \cite[Section~4.5]{SJ1994book} to obtain the error convergence rate $n^{-\alpha}(\log n)^{\alpha d}$ or alternatively resort to \emph{higher-order digital nets}~\cite{GSY2017} to achieve the exactly optimal rate of $n^{-\alpha}(\log n)^{(d-1)/2}$.

\section{Concluding remarks}\label{sec:conc}

In this paper, we introduced the M\"obius-transformed trapezoidal rule for numerical integration over the real line. We proved that this rule attains the optimal rate of convergence for the worst-case error for a wide class of weighted Sobolev spaces. Let us review some notable features of our method. The assumption $\rho\in\smon$ is general enough to include weights that decay at the speed $\re^{-|x|}$ or even slower as $x$ approaches infinity. Indeed, we can see the expected convergence for an integral weighted by the logistic probability density in Figure~\ref{fig:int-test2}. Moreover, the implementation of our method is straightforward: no information on the smoothness of the inputted integrand function is required, the only needed information on the weight are its values at M\"obius-transformed equidistant points on the unit circle, and the computational cost of the method is low.

As noted already in Section~\ref{sec:intro}, quadrature rules based on a variable transformation and a subsequent application of the trapezoidal rule have a long history~\cite{Sag1964, Schwartz1969, Stenger1973, TakahasiMori1973}.
However, in contrast to the Möbius-transformed trapezoidal rule, most existing rules have been designed to integrate analytic functions.
We highlight the relatively popular \emph{single} and \emph{double exponential formulas}~\cite{TakahasiMori1974,S1997,TO2023} that use the change of variables and what is called a single or double exponential transformation $\psi \colon \R \to I$ to approximate an integral over an interval $I$ as
\begin{equation} \label{eq:DE-formula}
  \int_I f(x) \rd x = \int_\R f(\psi(t)) \psi'(t) \rd t \approx h \sum_{j=-n}^n f(\psi(j h)) \psi'(jh) ,
\end{equation}
where $h > 0$.
These formulas are known to exhibit fast rates of convergence for functions analytic in certain regions of the complex plane~\cite{TanakaSugihara2009}. Even the optimality of the double exponential formula is shown in \cite{S1997} for analytic functions. We also refer to \cite{GKT2024} for recent further results for analytic functions.

When $I = (-1,1)$, the single and double exponential transformations are
\begin{align*}
  \psi_\textup{SE}(x) = \tanh\bigg(\frac{x}{2}\bigg) \quad \text{ and } \quad \psi_\textup{DE}(x) = \tanh\bigg(\frac{\pi}{2} \sinh(x) \bigg) .
\end{align*}
To obtain other quadrature rules such as~\eqref{eq:quadrature} to integrate functions in weighted Sobolev spaces over $\R$, one could replace the Möbius transformation with the inverse of a single or double exponential transformation.
We have observed numerically that the inverse single exponential transformation works well. However, it is not known yet if these formulas can achieve the optimal rate of convergence for weighted Sobolev spaces, and answering this question is left for future studies.

\section*{Acknowledgment}
We thank Yoshihito Kazashi for his valuable comments.

\appendix

\section{Induction proofs}
\label{ap:induction}
The purpose of this appendix is to provide induction proofs for four technical results used in the above analysis, namely the representations \eqref{eq:comb_terms_sth}, \eqref{eq:ind1}, \eqref{eq:ind2} and \eqref{eq:ind1p}.

Let us first prove \eqref{eq:comb_terms_sth}, i.e., that for any $\beta \in \N$, the derivative $(\omega^s)^{(\beta)}$ is a finite linear combination of terms of the form
\begin{equation}
\label{eq:induction1}
\omega^s \prod_{j=1}^{\gamma} \frac{\omega^{(\tau_j)}}{\omega} , \qquad \text{with} \ \ \gamma \leq \beta \ \ \text{and} \ \ \sum_{j=1}^\gamma \tau_j = \beta.
\end{equation}
First of all, for $\beta = 1$,
$$
(\omega^s)' = s \omega^{s} \frac{\omega'}{\omega},
$$
which is of the required form with $\gamma =1$ and $\tau_1 = 1$. Assume then that the claim holds for an arbitrary but fixed $\beta \in \N$. The proof is completed by showing that the derivative of a term of the form \eqref{eq:induction1} is a linear combination of terms that satisfy the same conditions with $\beta$ replaced by $\beta +1$:
\begin{align*}
\bigg( \omega^s \prod_{j=1}^{\gamma} \frac{\omega^{(\tau_j)}}{\omega}\bigg)' &= s  \omega^s \bigg(\frac{\omega'}{\omega} \prod_{j=1}^{\gamma} \frac{\omega^{(\tau_j)}}{\omega} \bigg)+ 
\sum_{k=1}^{\gamma} \omega^s \prod_{j=1}^{\gamma} \frac{\omega^{(\tau_j + \delta_{jk})}}{\omega} 
- 
\gamma  \omega^s  \bigg( \frac{\omega'}{\omega} \prod_{j=1}^{\gamma} \frac{\omega^{(\tau_j)}}{\omega}\bigg) \\
& = (s - \gamma) \omega^s \bigg(\frac{\omega'}{\omega} \prod_{j=1}^{\gamma} \frac{\omega^{(\tau_j)}}{\omega} \bigg) + \sum_{k=1}^{\gamma} \omega^s \prod_{j=1}^{\gamma} \frac{\omega^{(\tau_j + \delta_{jk})}}{\omega}, 
\end{align*}
where $\delta_{jk}$ is the Kronecker delta. As all summands on the right-hand side are of the required form, with the first one having $\gamma +1 \leq \beta + 1$ terms in its product and the others $\gamma$ terms, the assertion follows.

We then prove \eqref{eq:ind1}, i.e., that for $g = ((f \rho) \circ \phi) \phi'$ the derivative $g^{(\tau)}$, $\tau \in \N_0$, is a finite linear combination of terms of the form
\begin{equation}
\label{eq:induction2}
    \big((f^{(\tau_1)} \rho^{(\tau_2)}) \circ \phi \big) \prod_{j=1}^{\tau_1 + \tau_2+1} \phi^{(\tau_{3,j})},
    \quad \text{with} \ \
    \tau_1 + \tau_2 \leq \tau \ \ \text{and}  \  \sum_{j=1}^{\tau_1 + \tau_2 +1} \tau_{3,j} = \tau+1.
\end{equation}
The case $\tau = 0$ obviously holds with $\tau_1, \tau_2 = 0$ and $\tau_{3,1} = 1$. Assume then that the claim is true for an arbitrary but fixed $\tau \in \N_0$. The proof is completed by showing that the derivative of a term of the form \eqref{eq:induction2} is a linear combination of terms that satisfy the same conditions with $\tau$ replaced by $\tau +1$: 
\begin{align*}
\bigg( \big((f^{(\tau_1)} \rho^{(\tau_2)}) \circ \phi \big) \prod_{j=1}^{\tau_1 + \tau_2+1} & \phi^{(\tau_{3,j})}  \bigg)' 
=  \sum_{k=1}^{\tau_1 + \tau_2 +1} \big((f^{(\tau_1)} \rho^{(\tau_2)}) \circ \phi \big) \prod_{j=1}^{\tau_1 + \tau_2+1} \phi^{(\tau_{3,j}+ \delta_{jk})} \\
&+ \big((f^{(\tau_1 +1)} \rho^{(\tau_2)} + f^{(\tau_1)} \rho^{(\tau_2 + 1)})\circ \phi \big) \bigg(\phi' \prod_{j=1}^{\tau_1 + \tau_2+1} \phi^{(\tau_{3,j})} \bigg).
\end{align*}
As all summands on the right-hand side are of the required form, with either $\tau_1$ or $\tau_2$ increasing by one on the second line and neither of the two increasing on the first line when increasing the order of the derivative from $\tau$ to $\tau +1$, the assertion follows.

Then it is the turn of \eqref{eq:ind1p}, i.e., we aim to prove that for $g_p = ((f\,\rho^{1/p})\circ \phi) (\phi')^{1/p}$ the derivative $g_p^{(\tau)}$, $\tau \in \N$, is a finite linear combination of terms of the form
\begin{equation}
\label{eq:induction3}
    \big((f^{(\tau_1)} (\rho^{1/p})^{(\tau_2)}) \circ \phi \big) (\phi')^{1/p - \tau_3} \prod_{j=1}^{\tau_1+\tau_2 + \tau_3} \phi^{(\tau_{4,j})},
    \end{equation}
with
\begin{equation}
\label{eq:induction3_2}
\tau_1 + \tau_2 + \tau_3 \leq \tau \quad \text{and} \quad  \sum_{j=1}^{\tau_1+\tau_2 + \tau_3} \tau_{4,j} = \tau + \tau_3.
    \end{equation}
To prove the case $\tau = 1$, write
\begin{align*}
\Big(\big((f\,\rho^{1/p})\circ \phi\big) (\phi')^{1/p} \Big)' & = \big( (f' \rho^{1/p} + f (\rho^{1/p})') \circ \phi \big) (\phi')^{1/p} \phi' \\ 
& \qquad + \frac{1}{p} \big((f\,\rho^{1/p})\circ \phi\big)  (\phi')^{1/p - 1} \phi^{(2)},
\end{align*}
where all three terms are of the required form with $(\tau_1, \tau_2, \tau_3, \tau_{4,1}) = (1, 0, 0, 1)$, $(0,1, 0, 1)$ and $(0, 0, 1, 2)$, respectively. Assume then that the claim is true for an arbitrary but fixed $\tau \in \N$. The proof is completed by showing that the derivative of a term of the form \eqref{eq:induction3} is a linear combination of terms that satisfy the same conditions \eqref{eq:induction3}-\eqref{eq:induction3_2} with $\tau$ replaced by $\tau +1$: 
\begin{align*}
\bigg( \big(  ( & f^{(\tau_1)} (\rho^{1/p})^{(\tau_2)}) \circ \phi \big) (\phi')^{1/p - \tau_3} \prod_{j=1}^{\tau_1+\tau_2 + \tau_3}  \phi^{(\tau_{4,j})} \bigg)' \\ &= 
\big( \big(f^{(\tau_1+1)} (\rho^{1/p})^{(\tau_2)} + f^{(\tau_1)} (\rho^{1/p})^{(\tau_2+1)} \big) \circ \phi \big) (\phi')^{1/p-\tau_3} \bigg( \phi' \prod_{j=1}^{\tau_1+\tau_2 + \tau_3}  \phi^{(\tau_{4,j})} \bigg) \\
& \qquad + (1/p - \tau_3) \big((f^{(\tau_1)}(\rho^{1/p})^{(\tau_2)}) \circ \phi \big) (\phi')^{1/p - (\tau_3 +1)} \bigg( \phi^{(2)} \prod_{j=1}^{\tau_1+\tau_2 + \tau_3}  \phi^{(\tau_{4,j})} \bigg) \\
& \qquad + \sum_{k=1}^{\tau_1+\tau_2 + \tau_3} \big((f^{(\tau_1)} (\rho^{1/p})^{(\tau_2)}) \circ \phi \big) (\phi')^{1/p - \tau_3} \prod_{j=1}^{\tau_1+\tau_2 + \tau_3} \phi^{(\tau_{4,j} + \delta_{jk})}.
\end{align*}
As all summands on the right-hand side are of the required form, with either $\tau_1$ or $\tau_2$ increasing by one on the first line, $\tau_3$ increasing by one on the second line and none of the three indices increasing on the final line  when increasing the order of the derivative from $\tau$ to $\tau +1$, the assertion follows.

This appendix is completed by proving \eqref{eq:ind2}, i.e., that the derivatives of $\phi (\theta) = -c \cot(\theta/2)$ satisfy
\begin{equation}
\label{eq:induction4}
\phi^{(\tau)}(\theta) = \frac{\psi_\tau(\theta)}{\sin^{\tau+1}(\theta/2)}, \qquad \tau \in \N_0,
\end{equation}
with $\psi_\tau(\theta) \in C^\infty(\R)$ being a bounded finite linear combination of products of trigonometric functions. By definition, the claim holds for $\tau = 0$. Assume that $\phi^{(\tau)}$ is of the form \eqref{eq:induction4} for an arbitrary but fixed $\tau \in \N_0$ and let us differentiate:
\begin{align*}
\phi^{(\tau+1)}(\theta) =  \frac{\psi_\tau'(\theta) \sin(\theta/2) - \tfrac{\tau+1}{2} \psi_\tau(\theta) \cos(\theta/2)}{\sin^{\tau+2}(\theta/2)},
\end{align*}
which immediately proves the claim.

\bibliographystyle{plain}
\bibliography{ref.bib}

\begin{thebibliography}{10}

\bibitem{Adams1975}
Robert~A. Adams.
\newblock {\em Sobolev spaces}, volume Vol. 65 of {\em Pure and Applied
  Mathematics}.
\newblock Academic Press [Harcourt Brace Jovanovich, Publishers], New
  York-London, 1975.

\bibitem{AH2009}
Kendall Atkinson and Weimin Han.
\newblock {\em Theoretical numerical analysis}, volume~39 of {\em Texts in
  Applied Mathematics}.
\newblock Springer, Dordrecht, third edition, 2009.
\newblock A functional analysis framework.

\bibitem{B1961}
N.S. Bakhvalov.
\newblock An estimate of the mean remainder term in quadrature formulae.
\newblock {\em USSR Computational Mathematics and Mathematical Physics},
  1(1):68--82, 1962.

\bibitem{BG2004}
Hans-Joachim Bungartz and Michael Griebel.
\newblock Sparse grids.
\newblock {\em Acta Numer.}, 13:147--269, 2004.

\bibitem{Collatz60}
Lothar Collatz.
\newblock {\em The numerical treatment of differential equations. 3d ed},
  volume Band 60 of {\em Die Grundlehren der mathematischen Wissenschaften}.
\newblock Springer-Verlag, Berlin-G\"{o}ttingen-Heidelberg, german edition,
  1960.

\bibitem{DK2023}
Dinh D\~ung and Van~Kien Nguyen.
\newblock Optimal numerical integration and approximation of functions on
  {$\Bbb R^d$} equipped with {G}aussian measure.
\newblock {\em IMA J. Numer. Anal.}, 44(2):1242--1267, 2024.

\bibitem{DILP2018}
Josef Dick, Christian Irrgeher, Gunther Leobacher, and Friedrich
  Pillichshammer.
\newblock On the optimal order of integration in {H}ermite spaces with finite
  smoothness.
\newblock {\em SIAM J. Numer. Anal.}, 56(2):684--707, 2018.

\bibitem{DKS2013}
Josef Dick, Frances~Y. Kuo, and Ian~H. Sloan.
\newblock High-dimensional integration: the quasi-{M}onte {C}arlo way.
\newblock {\em Acta Numer.}, 22:133--288, 2013.

\bibitem{D2024}
Dinh Dũng.
\newblock Weighted sampling recovery of functions with mixed smoothness.
\newblock {\em arXiv preprint {\tt arXiv:2405.16400 [math.NA]}}, 2024.

\bibitem{EG2024}
Martin Ehler and Karlheinz Gröchenig.
\newblock An abstract approach to {M}arcinkiewicz--{Z}ygmund inequalities for
  approximation and quadrature in modulation spaces.
\newblock {\em Math. Comp.}, 2023.

\bibitem{G2020}
Walter Gautschi.
\newblock {G}auss quadrature and {C}hristoffel function for the logistic weight
  function, May 2020. https://purr.purdue.edu/publications/3418/1.

\bibitem{GHRR2023}
M.~Gnewuch, A.~Hinrichs, K.~Ritter, and R.~R\"u{\ss}mann.
\newblock Infinite-dimensional integration and {$L^2$}-approximation on
  {H}ermite spaces.
\newblock {\em J. Approx. Theory}, 300:Paper No. 106027, 32, 2024.

\bibitem{GKS2023}
Takashi Goda, Yoshihito Kazashi, and Yuya Suzuki.
\newblock Randomizing the trapezoidal rule gives the optimal {RMSE} rate in
  {G}aussian {S}obolev spaces.
\newblock {\em Math. Comp.}, 93(348):1655--1676, 2024.

\bibitem{GKT2024}
Takashi Goda, Yoshihito Kazashi, and Ken'ichiro Tanaka.
\newblock How {S}harp {A}re {E}rror {B}ounds? --{L}ower {B}ounds on
  {Q}uadrature {W}orst-{C}ase {E}rrors for {A}nalytic {F}unctions--.
\newblock {\em SIAM J. Numer. Anal.}, 62(5):2370--2392, 2024.

\bibitem{GSY2017}
Takashi Goda, Kosuke Suzuki, and Takehito Yoshiki.
\newblock Optimal order quasi--{M}onte {C}arlo integration in weighted
  {S}obolev spaces of arbitrary smoothness.
\newblock {\em IMA J. Numer. Anal.}, 37(1):505--518, 2017.

\bibitem{GSY2019}
Takashi Goda, Kosuke Suzuki, and Takehito Yoshiki.
\newblock Lattice rules in non-periodic subspaces of {S}obolev spaces.
\newblock {\em Numer. Math.}, 141(2):399--427, 2019.

\bibitem{GKNSSS2015}
I.~G. Graham, F.~Y. Kuo, J.~A. Nichols, R.~Scheichl, Ch. Schwab, and I.~H.
  Sloan.
\newblock Quasi-{M}onte {C}arlo finite element methods for elliptic {PDE}s with
  lognormal random coefficients.
\newblock {\em Numer. Math.}, 131(2):329--368, 2015.

\bibitem{Hille59}
Einar Hille.
\newblock {\em Analytic function theory. {V}ol. 1}.
\newblock Introductions to Higher Mathematics. Ginn and Company, Boston, MA,
  1959.

\bibitem{Jodra2014}
P.~Jodr\'a and M.~D. Jim\'enez-Gamero.
\newblock On a logarithmic integral and the moments of order statistics from
  the {W}eibull-geometric and half-logistic families of distributions.
\newblock {\em J. Math. Anal. Appl.}, 410(2):882--890, 2014.

\bibitem{KSG2023}
Yoshihito Kazashi, Yuya Suzuki, and Takashi Goda.
\newblock Suboptimality of {G}auss--{H}ermite quadrature and optimality of the
  trapezoidal rule for functions with finite smoothness.
\newblock {\em SIAM J. Numer. Anal.}, 61(3):1426--1448, 2023.

\bibitem{KPPW2020}
P.~Kritzer, F.~Pillichshammer, L.~Plaskota, and G.~W. Wasilkowski.
\newblock On alternative quantization for doubly weighted approximation and
  integration over unbounded domains.
\newblock {\em J. Approx. Theory}, 256:105433, 22, 2020.

\bibitem{KKNU2019}
Peter Kritzer, Frances~Y. Kuo, Dirk Nuyens, and Mario Ullrich.
\newblock Lattice rules with random {$n$} achieve nearly the optimal
  {$\mathcal{O}(n^{-\alpha-1/2})$} error independently of the dimension.
\newblock {\em J. Approx. Theory}, 240:96--113, 2019.

\bibitem{Kufner84}
Alois Kufner and Bohum\'ir Opic.
\newblock How to define reasonably weighted {S}obolev spaces.
\newblock {\em Comment. Math. Univ. Carolin.}, 25(3):537--554, 1984.

\bibitem{KPW2016}
F.~Y. Kuo, L.~Plaskota, and G.~W. Wasilkowski.
\newblock Optimal algorithms for doubly weighted approximation of univariate
  functions.
\newblock {\em J. Approx. Theory}, 201:30--47, 2016.

\bibitem{KN2016}
Frances~Y. Kuo and Dirk Nuyens.
\newblock Application of quasi-{M}onte {C}arlo methods to elliptic {PDE}s with
  random diffusion coefficients: a survey of analysis and implementation.
\newblock {\em Found. Comput. Math.}, 16(6):1631--1696, 2016.

\bibitem{KSW2007}
Frances~Y. Kuo, Ian~H. Sloan, and Henryk Wo\'zniakowski.
\newblock Periodization strategy may fail in high dimensions.
\newblock {\em Numer. Algorithms}, 46(4):369--391, 2007.

\bibitem{KWW2006}
Frances~Y. Kuo, Grzegorz~W. Wasilkowski, and Benjamin~J. Waterhouse.
\newblock Randomly shifted lattice rules for unbounded integrands.
\newblock {\em J. Complexity}, 22(5):630--651, 2006.

\bibitem{M1972}
Ju~I Makovoz.
\newblock On a method for estimation from below of diameters of sets in banach
  spaces.
\newblock {\em Mathematics of the USSR-Sbornik}, 16(1):139, 1972, English
  translation.

\bibitem{NP2020}
Robert Nasdala and Daniel Potts.
\newblock Transformed rank-1 lattices for high-dimensional approximation.
\newblock {\em Electron. Trans. Numer. Anal.}, 53:239--282, 2020.

\bibitem{N1992book}
Harald Niederreiter.
\newblock {\em Random number generation and quasi-{M}onte {C}arlo methods},
  volume~63 of {\em CBMS-NSF Regional Conference Series in Applied
  Mathematics}.
\newblock Society for Industrial and Applied Mathematics (SIAM), Philadelphia,
  PA, 1992.

\bibitem{NS2023}
Dirk Nuyens and Yuya Suzuki.
\newblock Scaled lattice rules for integration on {$\Bbb{R}^d$} achieving
  higher-order convergence with error analysis in terms of orthogonal
  projections onto periodic spaces.
\newblock {\em Math. Comp.}, 92(339):307--347, 2023.

\bibitem{Sag1964}
T.~W. Sag and G.~Szekeres.
\newblock Numerical evaluation of high-dimensional integrals.
\newblock {\em Math. Comp.}, 18(86):245--253, 1964.

\bibitem{Saranen2002}
Jukka Saranen and Gennadi Vainikko.
\newblock {\em Periodic integral and pseudodifferential equations with
  numerical approximation}.
\newblock Springer Monographs in Mathematics. Springer-Verlag, Berlin, 2002.

\bibitem{Schwab2011}
Christoph Schwab and Claude~Jeffrey Gittelson.
\newblock Sparse tensor discretizations of high-dimensional parametric and
  stochastic pdes.
\newblock {\em Acta Numerica}, 20:291–467, 2011.

\bibitem{Schwartz1969}
Charles Schwartz.
\newblock Numerical integration of analytic functions.
\newblock {\em J. Comput. Phys.}, 4:19--29, 1969.

\bibitem{S2006}
Avram Sidi.
\newblock Extension of a class of periodizing variable transformations for
  numerical integration.
\newblock {\em Math. Comp.}, 75(253):327--343, 2006.

\bibitem{SJ1994book}
I.~H. Sloan and S.~Joe.
\newblock {\em Lattice methods for multiple integration}.
\newblock Oxford Science Publications. The Clarendon Press, Oxford University
  Press, New York, 1994.

\bibitem{S1998}
Frauke Sprengel.
\newblock A tool for approximation in bivariate periodic {S}obolev spaces.
\newblock In {\em Approximation theory {IX}, {V}ol. 2 ({N}ashville, {TN},
  1998)}, Innov. Appl. Math., pages 319--326. Vanderbilt Univ. Press,
  Nashville, TN, 1998.

\bibitem{Stenger1973}
Frank Stenger.
\newblock Integration formulae based on the trapezoidal formula.
\newblock {\em J. Inst. Math. Appl.}, 12:103--114, 1973.

\bibitem{S1997}
Masaaki Sugihara.
\newblock Optimality of the double exponential formula---functional analysis
  approach.
\newblock {\em Numer. Math.}, 75(3):379--395, 1997.

\bibitem{SK2024}
Yuya Suzuki and Toni Karvonen.
\newblock Construction of optimal algorithms for function approximation in
  gaussian sobolev spaces.
\newblock {\em arXiv preprint {\tt arXiv:2402.02917 [math.NA]}}, 2024.

\bibitem{TakahasiMori1974}
Hidetosi Takahasi and Masatake Mori.
\newblock Double exponential formulas for numerical integration.
\newblock {\em Publ. Res. Inst. Math. Sci.}, 9:721--741, 1973/74.

\bibitem{TakahasiMori1973}
Hidetosi Takahasi and Masatake Mori.
\newblock Quadrature formulas obtained by variable transformation.
\newblock {\em Numer. Math.}, 21:206--219, 1973/74.

\bibitem{TanakaSugihara2009}
Ken'ichiro Tanaka, Masaaki Sugihara, Kazuo Murota, and Masatake Mori.
\newblock Function classes for double exponential integration formulas.
\newblock {\em Numer. Math.}, 111(4):631--655, 2009.

\bibitem{TO2023}
Ken'ichiro Tanaka and Okayama Tomoaki.
\newblock {\em Numerical Methods with Variable Transformations}.
\newblock Iwanami Studies in Advanced Mathematics. Iwanami Shoten, Publishers,
  2023. In Japanese.

\bibitem{T1990}
Vladimir Temlyakov.
\newblock A new way of obtaining a lower bound on errors in quadrature
  formulas.
\newblock {\em Mat. Sb.}, 181(10):1403--1413, 1990.

\bibitem{T2018_book}
Vladimir Temlyakov.
\newblock {\em Multivariate Approximation}, volume~32 of {\em Cambridge
  Monographs on Applied and Computational Mathematics}.
\newblock Cambridge University Press, 2018.

\end{thebibliography}

\end{document}